\title[Compact moduli of K3 surfaces with automorphism]%
{Compact moduli of K3 surfaces\\ with a nonsymplectic automorphism}
\date{February 15, 2022}
\author{Valery Alexeev}
\email{valery@uga.edu}
\author{Philip Engel}
\email{philip.engel@uga.edu}
\author{Changho Han}
\email{changho.han@uga.edu}
\address{Department of Mathematics, University of Georgia, Athens GA
  30602, USA}
\begin{document}
\begin{abstract}
  We construct a modular compactification via stable slc pairs for the
  moduli spaces of K3 surfaces with a nonsymplectic group of automorphisms under the
  assumption that some combination of the fixed loci of automorphisms
  defines an effective big divisor, and prove that it is semitoroidal.
\end{abstract}
\maketitle
\setcounter{tocdepth}{1}
\tableofcontents

\section{Introduction}

\label{sec:intro}

Let $X$ be a smooth K3 surface over the complex numbers.  An automorphism
$\sigma$ of $X$ is called \emph{non-symplectic} if it has finite
order $n>1$ and $\sigma^*(\omega_X)=\zeta_n \omega_X$, where
$\omega_X\in H^{2,0}(X)$ is a nonzero $2$-form and $\zeta_n$ is a
primitive $n$th root of identity.  By changing the generator of the
cyclic group $\mu_n$ we can and will assume that
$\zeta_n = \exp(2\pi i/n)$. It is well known that a K3 surface
admitting such an automorphism is projective. The possibilities for the
order are the numbers $n$ whose Euler function satisfies
$\varphi(n)\le 20$ with the single exception $n\ne60$, see
\cite[Thm.~3]{machida1998on-k3-surfaces}.

In this paper we study compactification
of moduli spaces of pairs $(X,\sigma)$.
But to begin with, the automorphism group $\Aut(X,\sigma)$, i.e.
those automorphisms of $X$ commuting with $\sigma$,
may be infinite. To fix this, we will usually additionally assume:
\begin{equation}
  \label{eq:g2}
  \tag{$\exists g\ge2$}
  \text{The fixed locus }
  \Fix(\sigma)
  \text{ contains a curve $C_1$ of genus } g\ge2.
\end{equation}

By looking at the $\mu_n$-action on the tangent space of any fixed
point, it is easy to see that $\Fix(\sigma)$ is a disjoint union of
several smooth curves and points. The Hodge index theorem
implies at most one of the fixed curves has genus $g\ge2$.
One could instead have one or two fixed curves of genus
$g=1$. All other fixed curves are isomorphic to $\bP^1$.

Under the \ga{} assumption, the
group $\Aut(X,\sigma)$ is finite. The opposite is almost true.  For
example let $n=2$, i.e.\ $\sigma$ is an involution. Then $\sigma^*$
fixes the Neron-Severi lattice $S_X\subset H^2(X,\bZ)$ and acts as
multiplication by $(-1)$ on the lattice $T_X=S_X^\perp$ of
transcendental cycles. In this case $\Aut(X,\sigma)=\Aut(X)$. 

Deformation classes of such K3
surfaces $(X,\sigma)$ are classified by the primitive $2$-elementary
hyperbolic sublattices $S\subset L_{K3}$. By Nikulin
\cite{nikulin1979integer-symmetric} there are 75 cases, uniquely
determined by certain invariants $(g,k,\delta)$. Among them 51
satisfy \ga. The only case when $|\Aut(X)|<\infty$ but
\ga{} is not satisfied is $(g,k,\delta)=(1,9,1)$ which is
the one-dimensional mirror family to K3 surfaces of degree~$2$. 
In the case $(g,k,\delta)=(2,1,0)$ one has $|\Aut(X)|=\infty$ but
the set $\Fix(\sigma)$ consists of two
elliptic curves, so \ga{} does not hold.

\smallskip

Since the moduli stack of smooth quasipolarized K3 surfaces is
notoriously non-separated, so is usually the moduli stack of smooth
K3s with a nonsymplectic automorphism. For a fixed isometry
$\rho\in O(L_{K3})$ of order $n$, there exists the moduli stack and moduli
space of smooth K3 surfaces ``of type $\rho$'': those pairs $(X,\sigma)$
where the action of $\sigma^*$ on $H^2(X,\bZ)$ can be modeled by $\rho$.
We construct them
in Section~\ref{sec:moduli}.
The maximal separated quotient of $F_\rho$ is
$\dminusrho$, where $\bD_\rho$ is a
symmetric Hermitian domain of type IV if $n=2$ or a complex ball if
$n>2$, $\Gamma_\rho$ is an arithmetic group, and
$\Delta_\rho\subset\bD_\rho$ is the discriminant locus. 

Under the assumption \ga{}, the space
$F_\rho\uade:=\dminusrho$ is the coarse moduli
space for the K3 surfaces $\oX$ with $ADE$ singularities, obtained
from the smooth K3 surfaces $X$ by contracting the $(-2)$-curves
perpendicular to the component $C_1$ with $g\ge2$ in $\Fix(\sigma)$. 
The stack of such $ADE$ K3 surfaces is separated.

\medskip

The main goal of this paper is to construct a functorial,
geometrically meaningful compactification of the moduli space
$F_\rho\uade$, under the assumption \ga. Let $R=C_1$, 
$\varphi_{|mR|}\colon X\to\oX$ be the contraction as above and $\oR$
be the image of $R$.  Then for any $0<\epsilon\ll1$ the pair
$(\oX,\epsilon\oR)$ is a stable pair with semi log canonical
singularities. Then the theory of KSBA moduli spaces (see
\cite{kollar2021families-of-varieties} for the general case or
\cite{alexeev2019stable-pair, alexeev2020compactifications-moduli} for
the much easier special case needed here) gives a moduli
compactification $\oF_\rho\uslc$ to a space of stable pairs with automorphism.

Our main Theorem~\ref{thm:main} says that
$\oF_\rho\uslc$ is a semitoroidal
compactification of $\drho$. This class of compactifications
was introduced by Looijenga \cite{looijenga2003compactifications-defined2}
as a common generalization of Baily-Borel and toroidal compactifications.
As a corollary, the family of $ADE$ K3
surfaces with an automorphism extends along the inclusion
$\dminusrho\hookrightarrow \drho$.

The proof applies a modified form of one of the main theorems of
\cite{alexeev2021compact} about so-called {\it recognizable} divisors.
The $g\ge2$ component of the fixed locus is a canonical choice of a
polarizing divisor. We prove that this divisor is recognizable.

As we point out in Section~\ref{sec:extensions}, the results also
extend to the more general situation of a symmetry group $G\subset\Aut
X$ which is not purely symplectic. 

\smallskip

The cases $n=2,3,4,6$ are of the most interest for compactifications.
If $n\ne 2,3,4,6$ then the space $\drho$ is already compact, see 
\cite{matsumoto2016degeneration} or Corollary~\ref{cor:ords2346}.

K3 surfaces with an involution were classified by Nikulin in
\cite{nikulin1979integer-symmetric}.  K3s with a non-symplectic
automorphism of prime order $p\ge3$ we classified by Artebani, Sarti,
and Taki in \cite{artebani2008non-symplectic,
  artebani2011k3-surfaces}. The case $n=4$ was treated by
Artebani-Sarti in \cite{artebani2015symmetries} and the case $n=6$ by
Dillies in \cite{dillies2009order6, dillies2012on-some}.

We note two cases where our KSBA, semitoroidal compactification
$\oF_\rho\uslc$ is computed in complete detail:
Alexeev-Engel-Thompson \cite{alexeev2019stable-pair} for the case of K3
surfaces of degree 2, generically double covers of $\bP^2$, and
a forthcoming work
Deopurkar-Han~\cite{deopurkar2021stable-quadrics} which treats a
$9$-dimensional component in the moduli for $n=3$.

\smallskip

The paper is organized as follows. In Section~\ref{sec:moduli} we set
up the general theory of the moduli of K3 surfaces with a
non-symplectic automorphisms.
In Section~\ref{sec:stable-pair-compactifications} we define the
stable pair compactifications and prove the main
Theorem~\ref{thm:main}.
In Section~\ref{sec:quotient-surfaces} we relate K3 surfaces with
nonsymplectic automorphisms with their quotients $Y=\oX/\mu_n$, and the
compactification $\oF_\rho\uslc$ with the KSBA compactification of the
moduli spaces of log del Pezzo pairs~$(Y,\frac{n-1+\epsilon}{n} B)$.

In Section~\ref{sec:extensions} we extend the results in two different
ways: to K3 surfaces with a finite group of symmetries $G\subset\Aut
X$ which is not purely symplectic, and to more general polarizing
divisors associated with such a group action.

Throughout, we work over the field of complex numbers.

\begin{acknowledgements}
  The first author was partially supported by NSF under DMS-1902157.
\end{acknowledgements}

\section{Moduli of K3s with a nonsymplectic automorphism}
\label{sec:moduli}

\subsection{Notations}
\label{sec:notations}

A lattice is a free abelian group with an integral-valued symmetric
bilinear form.  Let $\lk=H^{\oplus 3}\oplus E_8^{\oplus 2}$ 
be a fixed copy of the even unimodular lattice of signature
$(3,19)$, where $H=\II_{1,1}$ corresponds to the bilinear form
$b(x,y)=xy$ and $E_8$ is the standard negative definite even lattice
of rank $8$. For any smooth K3 surface $X$ the cohomology lattice
$H^2(X,\bZ)$ is isometric to $\lk$.

Denote by $S=S_X$ the Neron-Severi lattice $\Pic(X)=\NS(X)$.
By the Lefschetz $(1,1)$-theorem, it equals
$(H^{2,0}(X))^\perp \cap H^2(X,\bZ) \subset H^2(X,\bC)$. We have
$H^{2,0}(X)=\bC\omega_X$ for some nowhere vanishing holomorphic two-form
$\omega_X$.
If $X$ is projective, then
$S_X$ is nondegenerate of signature $(1,r_X-1)$. In this case,
its orthogonal complement
$T_X=(S_X)^\perp\subset H^2(X,\bZ)$ is the {\it transcendental lattice},
of signature $(2,20-r_X)$. 
The \emph{K\"ahler cone} $\cK_X\subset H^{1,1}(X,\bR)$ is the set of
classes of K\"ahler forms on $X$; it is an open convex cone.

\begin{theorem}[Torelli Theorem for K3 surfaces, \cite{piateski-shapiro1971torelli}]
  \label{thm:torelli} The isomorphisms $\sigma\colon X'\to X$ are in bijection with the isometries
$\sigma^*\colon H^2(X,\bZ)\to H^2(X',\bZ)$ satisfying 
the conditions $\sigma^*(H^{2,0}(X))=H^{2,0}(X')$
and $\sigma^*(\cK_X)=\cK_{X'}$. \end{theorem}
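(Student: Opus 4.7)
I would prove the Torelli theorem by the classical Piatetski–Shapiro–Shafarevich strategy: establish the result first for Kummer surfaces as a base case, then extend to all K3s by density in the period domain combined with local Torelli. The forward direction of the bijection is automatic, so the task is to produce a geometric isomorphism $\sigma \colon X' \to X$ from a given Hodge isometry $\psi \colon H^2(X,\bZ) \to H^2(X',\bZ)$ satisfying the two stated conditions.

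For the Kummer base case, suppose $X = \mathrm{Km}(A)$ and $X' = \mathrm{Km}(A')$ with $A, A'$ complex tori. The lattice $H^2(X,\bZ)$ admits a canonical description in terms of the pullback $H^2(A,\bZ)(2)$ and the span of the sixteen exceptional $(-2)$-curves above the $2$-torsion. A K\"ahler-preserving Hodge isometry respects this description after composition with an element of the Weyl group generated by reflections in the exceptional classes, so it descends to a Hodge isometry $H^2(A,\bZ) \to H^2(A',\bZ)$; by the (easy) Torelli theorem for complex tori this comes from an isomorphism $A \to A'$, inducing $\sigma$. For the extension, I would use surjectivity of the period map --- itself proved via Bogomolov--Tian--Todorov unobstructedness of K3 deformations together with explicit polarized Kummer constructions --- and the density of Kummer periods in $\Omega_{\lk}$: embed $(X, X', \psi)$ into a one-parameter analytic deformation whose central fibre is a Kummer pair carrying a specialized Hodge isometry, construct $\sigma$ there, and spread it out using local Torelli (injectivity of the period map on Kuranishi spaces, a consequence of deformation theory for K3s).

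The main obstacle, and the reason the strong Torelli theorem is subtle rather than formal, is the chamber structure of the positive cone. The positive cone in $H^{1,1}(X,\bR)$ is cut by $(-2)$-hyperplanes into many chambers, only one of which is $\cK_X$, and a general Hodge isometry of the lattice permutes these chambers freely. The K\"ahler-cone hypothesis on $\psi$ rules out such permutations, but propagating the correct chamber through the deformation, and verifying that the geometric isomorphism produced induces exactly $\psi$ rather than $\psi$ composed with a Weyl reflection, requires significant bookkeeping. A key rigidity input is that any automorphism of a K3 surface acting trivially on $H^2$ is itself trivial, which guarantees that $\sigma$, once it exists, is uniquely determined by $\psi$ and so the construction glues coherently across the deformation.
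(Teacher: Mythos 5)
The paper does not prove this statement: it is quoted as the classical Global Torelli Theorem and attributed to Piatetski-Shapiro--Shafarevich, so there is no internal proof to compare against. Your outline is the standard classical strategy (Kummer base case, density of Kummer periods, local Torelli), and the base case and the density input are described essentially correctly. The difficulty is concentrated in the step you summarize as ``construct $\sigma$ there, and spread it out using local Torelli.''

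That step has a genuine gap. Local Torelli, via the universality of the Kuranishi family, shows only that the locus of parameters $t$ in your one-parameter family where a marking-compatible isomorphism $X_t'\to X_t$ exists is \emph{open}. Openness around the Kummer central fibre does not reach the fibre you actually care about, because the size of the neighborhood on which the isomorphism propagates is not controlled by the distance to the Kummer period point; choosing the Kummer point closer to $\pi(X)$ does not help. To conclude, one needs the complementary \emph{closedness} statement: if marking-compatible isomorphisms exist for a sequence $t_n\to t_0$, then one exists at $t_0$. This is the Burns--Rapoport/PSS compactness argument --- the graphs $\Gamma_{t_n}\subset X_{t_n}\times X_{t_n}'$ have uniformly bounded volume precisely because the isometries preserve a (relatively) K\"ahler class, Bishop's theorem then yields a limit cycle, and a separate analysis shows this limit equals the graph of an isomorphism plus components supported over $(-2)$-curves which can be removed using the K\"ahler-cone hypothesis. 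Without this input the deformation argument does not close up, and it is exactly here (not merely in ``bookkeeping'') that the chamber structure you mention is actually controlled. A smaller caveat: the rigidity statement that an automorphism acting trivially on $H^2(X,\bZ)$ is the identity is itself a nontrivial theorem (provable by Lefschetz fixed-point arguments or by propagation from the Kummer case) and should be invoked with care so as not to be circular with the Torelli statement being proved.
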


For any lattice $H$, a \emph{root} is a vector $\delta\in H$ with
$\delta^2=-2$. The set of all roots is denoted by $H\mt$.  The Weyl
group $W(H)$ is the group generated by reflections
$v \mapsto v+(v,\delta)\delta$ for $\delta\in H\mt$.  It is a normal
subgroup of the isometry group $O(H)$.

\subsection{Moduli of marked unpolarized K3s}
\label{subsec:moduli-unpolarized}

The basic reference here is \cite{asterisque1985geometrie-des-surfaces}. 
Let $X$ be a K3 surface. A \emph{marking} is an isometry
$\phi\colon H^2(X,\bZ)\to \lk$.
Let
\begin{displaymath}
  \bD = \bP\{ x \in \lkc \mid x\cdot x=0,\ x\cdot\bar x>0\},
  \quad \dim \bD = 20.
\end{displaymath}
There exists a fine moduli space $\cM$ of \emph{marked K3 surfaces}
and a period map $\pi\colon\cM\to\bD$,
$(X,\phi)\mapsto \phi(H^{2,0}(X)) \in \bP(\lkc)$. $\cM$ is
a non-Hausdorff $20$-dimensional complex
manifold with two isomorphic connected
components interchanged by negating $\phi$. The period map
is \'etale and surjective.

For a period point $x\in\bD$, the vector space
$(\bC x\oplus \bC \bar x)\cap \lkr\subset \lkc$ is
positive definite of rank $2$ and its orthogonal
complement $x^\perp \cap\lkr$ has signature $(1,19)$. Let
\begin{displaymath}
  \{ v\in x^\perp\cap \lkr \mid v^2>0 \} = P_x \sqcup (-P_x)
\end{displaymath}
be the two connected components of the set of positive square vectors.
 Then the fiber $\pi\inv(x)$ is identified with the set
of connected components $\cC$ of 
\begin{equation}\label{eq:cC-chambers}
  \big( P_x\sqcup (-P_x) \big) \setminus
  \cup_\delta\,\delta^\perp
  \ \text{for}\ \delta\in (x^\perp\cap \lk)\mt.
\end{equation}

Namely, an open chamber $\cC$ is identified with the K\"ahler cone
$\cK_X$ of the corresponding marked K3 surface $X$ via the marking
$\phi$. The connected components
are permuted by the reflections and $\pm\id$, and $\pi\inv(x)$ is a
torsor under the group $\bZ_2\times W_x$, where $W_x=W(x^\perp\cap \lk)$. Since
$x^\perp\cap \lkr$ is hyperbolic, the group and the fiber
$\pi\inv(x)$ may be infinite. For a general point $x\in\bD$, the
lattice $x^\perp\cap \lk$ has no roots and the fiber $\pi\inv(x)$ consists of two
points, one in each connected component of~$\cM$.

\subsection{Moduli of $\rho$-marked and $\rho$-markable K3 surfaces
  with automorphisms}
\label{sec:lattice polarized}

Fix $\rho\in O(\lk)$ an isometry of order $n>1$ and consider
a K3 surface $X$ with a non-symplectic automorphism $\sigma$
of order $n$.

\begin{definition}\label{def:rho-marking}
  A {\it $\rho$-marking} of
  $(X,\sigma)$ is an isometry $\phi:H^2(X,\bZ)\to \lk$ such that
  $\sigma^* = \phi\inv\circ\rho\circ\phi$.  We say that $(X,\sigma)$ is
  {\it $\rho$-markable} if it admits a $\rho$-marking.
  % It is clear
  % that for any given $(X,\sigma)$, there exists some such $\rho$.

  A family of $\rho$-marked surfaces is a smooth morphism
  $f\colon (\cX,\sigma_B)\to B$ with an automorphism
  $\sigma_B\colon\cX\to\cX$ over $B$, together with an isomorphism of
  local systems $\phi_S\colon R^2f_*\underline{\bZ} \to
  L\otimes\underline{\bZ}_B$ such that 
  every fiber is a K3 surface with a $\rho$-marking.  A family
  $f\colon (\cX,\sigma_B)\to B$ is $\rho$-markable if such an
  isomorphism exists locally in complex-analytic topology on $B$.

  We define the moduli stacks $\cM_\rho$ of $\rho$-marked,
  resp. $F_\rho$ of $\rho$-markable K3 by taking $\cM_\rho(B)$,
  resp. $F_\rho(B)$ to be the groupoids of such families
  over base $B$.
\end{definition}

\begin{definition}
  Define $L_\bC^{\zeta_n}$ to be the eigenspace $x\in L_\bC$ such that
  $\rho(x)=\zeta_nx$, and the subdomain
  $\bD_\rho=\bP(L_\bC^{\zeta_n})\cap \bD\subset \bD$. Define
  $\Gamma_\rho\subset O(\lk)$ as the group of changes-of-marking:
  $\Gamma_\rho := \{\gamma \in O(L)\mid \gamma\circ \rho = \rho\circ
  \gamma\}$.
\end{definition}

\begin{definition} Let the {\it generic transcendental lattice}
  $T_\rho:=\lkc^{\rm prim}\cap \lk$ be the intersection of $\lk$ with the
  sum of all primitive eigenspaces of $\rho$, and let the {\it generic
    Picard lattice} be $S_\rho=(T_\rho)^\perp$.
  % Let $\lk^\rho\subset S_\rho$ be classes in $\lk$ fixed by $\rho$.
  Let $L^G=\Fix(\rho)\subset S_\rho$ be classes in $\lk$ fixed by
  $\rho$.  (Here, we use $G=\langle\rho\rangle\simeq\bZ_n$ to avoid
  confusing notation, as $L^G$ would be.)
\end{definition}

  Note that the $\zeta_n$-eigenspaces $\lkc^{\zeta_n}$ and
  $T_{\rho,\,\bC}^{\zeta_n}$ coincide, and that for any
  K3 surface with a $\rho$-marking the two fixed sublattices
  $\phi\colon S_X^{G} = H^2(X,\bZ)^{G} \isoto \lk^G$
  are identified.

For there to exist a $\rho$-markable algebraic K3 surface,
the signature of $T_\rho$ must be $(2,\ell)$ for some $\ell$,
as there is necessarily a vector of positive norm fixed by $\sigma^*$
(the sum of a $\sigma^*$-orbit of an ample class). The converse is
also true.

When $n=2$, we have that $\bD_\rho\subset \bP(T_{\rho,\bC})$ is (two
copies of) 
the Type IV domain associated
to the lattice $T_\rho$. When $n\geq 3$, the condition that $x \cdot x=0$
is vacuous on $\bD_\rho$ because $x\cdot y=0$ for eigenvectors
$x,y$ of $\rho$ with non-conjugate eigenvalue. Thus,
\begin{displaymath}
  \bD_\rho =\bP\{x\in T_{\rho,\,\bC}^{\zeta_n}  \mid x\cdot \bar x>0\}  
\end{displaymath}
is a complex ball, a Type I domain. The Hermitian form $x\cdot \bar y$ on $T_{\rho,\,\bC}^{\zeta_n}$
necessarily has signature $(1,\ell)$ for some $\ell$ for there to exist a $\rho$-markable
K3 surface.

\begin{definition}
The discriminant locus is 
$\Delta_\rho:=(\cup_\delta \,\delta^\perp)\cap  \bD_\rho $ ranging
over all roots $\delta$ in $(L^G)^\perp$.
\end{definition}

\begin{lemma}\label{lem:im-pi-rho}
  Let $\rho\in O(L)$ be an isometry of order $n>1$. Then
  \begin{enumerate}
  \item A marking $\phi\colon H^2(X,\bZ)\to L$      defines a
    $\rho$-marking, i.e. defines an automorphism $\sigma$ such that
    $\sigma^*=\phi\inv\circ\rho\circ\phi$ iff the period $x=\pi((X,\phi))$
    lies in $\bD_\rho\setminus\Delta_\rho$ and there exists an ample
    line bundle $\cL_h$ on $X$ with $h=\phi(\cL_h)\in L^G$.
  \item For a point $x\in\bD_\rho\setminus\Delta_\rho$ the set of
    $\rho$-marked K3s with this period is a torsor over the group
    $\Gamma_\rho\cap (\bZ_2\times W_x)$.
  \end{enumerate}
\end{lemma}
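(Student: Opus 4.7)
The plan is to reduce the lemma to the Torelli theorem (Theorem~\ref{thm:torelli}), which characterizes isometries of $H^2(X,\bZ)$ induced by automorphisms via preservation of the Hodge line and the K\"ahler cone.

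For the forward direction of~(1), if $\sigma^* = \phi^{-1}\circ\rho\circ\phi$, then $\sigma^*\omega_X = \zeta_n\omega_X$ forces $\phi(\omega_X)$ to be a $\zeta_n$-eigenvector of $\rho$, so the period $x$ lies in $\bD_\rho$; averaging any ample class over the finite cyclic group $\langle\sigma\rangle$ produces an ample $\sigma^*$-invariant line bundle $\cL_h$ with $h = \phi(\cL_h) \in L^G$. To see $x \notin \Delta_\rho$, suppose for contradiction that some root $\delta \in (L^G)^\perp$ satisfies $\delta \cdot x = 0$. Then $\phi^{-1}(\delta) \in \Pic(X)$ is a $(-2)$-class, so Riemann--Roch yields a nonzero effective representative $E \in \{\pm\phi^{-1}(\delta)\}$. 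Since $h \in L^G$ and $\delta \in (L^G)^\perp$, we compute $\cL_h \cdot E = \pm(h \cdot \delta) = 0$, contradicting ampleness of $\cL_h$.

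For the reverse direction of~(1), set $\tau := \phi^{-1}\circ\rho\circ\phi \in O(H^2(X,\bZ))$. The hypothesis $x \in \bD_\rho$ implies $\tau(\omega_X) = \zeta_n\omega_X$, so $\tau$ preserves $H^{2,0}(X)$. The ample class $\cL_h$ is $\tau$-fixed, hence $\tau$ preserves the component $P_x$ of the positive cone; since $\cL_h$ lies in the interior of the K\"ahler chamber $\cK_X$ and $\tau$ permutes the chambers in~\eqref{eq:cC-chambers}, it must fix $\cK_X$. Torelli then produces a unique automorphism $\sigma \in \Aut(X)$ with $\sigma^* = \tau$; as $\tau^n = \mathrm{id}$ and $\tau(\omega_X) = \zeta_n\omega_X$ with $\zeta_n$ a primitive $n$-th root of unity, $\sigma$ is non-symplectic of order exactly~$n$.

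For~(2), the fiber $\pi^{-1}(x)$ is a torsor under $\bZ_2 \times W_x$ by Section~\ref{subsec:moduli-unpolarized}. Given a $\rho$-marking $(X,\phi)$ at $x$ and $g \in \bZ_2 \times W_x$, the modified marking $g\circ\phi$ is again a $\rho$-marking (inducing the same $\sigma$) precisely when $(g\phi)^{-1}\rho(g\phi) = \phi^{-1}\rho\phi$, i.e.\ when $g$ commutes with $\rho$, meaning $g \in \Gamma_\rho$. Conversely, any other $\rho$-marking at $x$ differs from $(X,\phi)$ by a unique element of $\bZ_2 \times W_x$, which must commute with $\rho$ to preserve the $\rho$-marking condition, yielding the desired torsor structure under $\Gamma_\rho \cap (\bZ_2 \times W_x)$. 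The most delicate step is the exclusion $x \notin \Delta_\rho$ in~(1): it requires converting a root in $(L^G)^\perp \cap x^\perp$ into an effective divisor via Riemann--Roch and exploiting $L^G \perp (L^G)^\perp$ to derive a contradiction with ampleness of the averaged class.
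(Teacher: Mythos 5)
Your argument follows essentially the same route as the paper: averaging an ample class over $\langle\sigma\rangle$ to produce an invariant ample $h\in L^G$, converting a root $\delta\in(L^G)^\perp\cap x^\perp$ into an effective $(-2)$-class via Riemann--Roch to contradict ampleness, invoking Torelli for the converse, and characterizing changes of marking that remain $\rho$-markings by commutation with $\rho$. All of these steps are correct and match the paper's reasoning.

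There is, however, one genuine omission in part~(2). A torsor is in particular non-empty, so the claim requires showing that \emph{every} $x\in\bD_\rho\setminus\Delta_\rho$ is the period of at least one $\rho$-marked surface; your proof begins ``Given a $\rho$-marking $(X,\phi)$ at $x$\dots'' and so only establishes that the action of $\Gamma_\rho\cap(\bZ_2\times W_x)$ on the fiber is free and transitive \emph{if} the fiber is non-empty. The paper supplies the missing existence step: since $x\notin\Delta_\rho$, no root $\delta\in(x^\perp\cap \lk)\mt$ lies in $(L^G)^\perp$, hence $\lk^G\not\subset\cup_\delta\,\delta^\perp$, so some Weyl chamber $\cC\subset P_x\setminus\cup_\delta\,\delta^\perp$ meets $\lk^G$; the marked surface corresponding to that chamber then carries an ample class $h\in\cC\cap \lk^G$ and is $\rho$-marked by part~(1). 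This surjectivity of the period map is not a formality --- it is used later in the paper (e.g.\ in the bijectivity statement of Theorem~\ref{separated-rho}) --- so you should add it. A second, smaller point: both in your write-up and implicitly in the paper, one should be aware that ``$g\circ\phi$ is a $\rho$-marking'' is being interpreted as inducing an automorphism with $\sigma^*=(g\phi)\inv\circ\rho\circ(g\phi)$, which is exactly the condition $g\inv\rho g=\rho$; your phrasing ``which must commute with $\rho$ to preserve the $\rho$-marking condition'' is at the same level of detail as the paper and is acceptable.
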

\begin{proof}
  Because the action is nonsymplectic, $\rho(x)=\zeta_nx\ne x$. 
  For any $h\in L^G$ one has $\rho(h)=h$, which implies that
  $hx=0$. Thus, $L^G\perp x$ and $S_X^G\simeq L^G$. 
  
  Clearly, one must have $x\in\bD_\rho$. By the Torelli theorem,
  automorphism $a=\phi\inv \circ \rho\circ\phi$ of $H^2(X,\bZ)$ is
  induced by an automorphism $\sigma$ of $X$ iff it sends the K\"ahler
  cone $\cK_X$ to itself. By averaging, this is equivalent to having
  an $a$-invariant K\"ahler class $\cL_h\in\cK_X\cap H^2(X,\bZ)$.
  And since
  $L^G\perp x$, one has $\cL_h\perp \omega_X$, so $\cL_h\in
  S_X$ and $\cL_h$ is an ample line bundle. This proves (1).

  If $x\perp\delta$ for some root $\delta\in(L^G)^\perp$ then
  $\cL_\delta=\phi\inv(\delta)\in\Pic(X)$ and either $\cL_\delta$ or
  $\cL\inv_\delta$ is effective.
  Then for $\cL_h$ as in
  part~(1) one has both $\cL_h\cdot\cL_\delta=0$ because
  $h\perp\delta$ and $\cL_h\cdot\cL_\delta\ne0$ because $\cL_h$ is
  ample.  Contradiction.

  On the other hand, let $x\in\bD_\rho\setminus\Delta_\rho$. Then
  $\lk^G\not\subset \cup_\delta\, \delta^\perp$ for
  ${\delta\in(x^\perp\cap \lk)\mt}$.  Thus, there exists a chamber
  $\cC$ in $P_x\setminus\cup_\delta\,\delta^\perp$ such that
  $\cC\cap \lk^G\ne\emptyset$. Let $(X,\phi)$ be the K3 surface
  corresponding to this chamber. Then there exists
  $h\in \cC\cap \lk^G$ and by part~(1) the marking $\phi$ is a
  $\rho$-marking. 

  Any surface with the same period $x$ is isomorphic to $X$, but with
  a marking $\phi' = g\circ \phi$ for some $g\in\bZ_2\times W_x$. Then
  one has both $\sigma^*=\phi\inv\circ\rho\circ\phi$ and
  $\sigma^*=(\phi')\inv\circ\rho\circ\phi'$ iff $g\in
  \Gamma_\rho$. This proves~(2).
\end{proof}

\begin{lemma}
  There exists a fine moduli space $\cM_\rho$ of $\rho$-marked K3
  surfaces with a non-symplectic automorphism. $\cM_\rho$ an open subset
  of $\pi\inv(\bD_\rho\setminus\Delta_\rho).$
\end{lemma}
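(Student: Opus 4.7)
The plan is to realize $\cM_\rho$ as an open subset of the submanifold $\pi\inv(\bD_\rho\setminus\Delta_\rho)\subset\cM$, using Lemma \ref{lem:im-pi-rho} as the pointwise criterion. I would define $\cM_\rho$ set-theoretically as the locus of marked K3s $(X,\phi)\in\pi\inv(\bD_\rho\setminus\Delta_\rho)$ for which there exists an ample line bundle $\cL_h\in\Pic(X)$ with $\phi(\cL_h)\in L^G$. By Lemma \ref{lem:im-pi-rho}(1), this is precisely the locus where the isometry $\phi\inv\circ\rho\circ\phi$ of $H^2(X,\bZ)$ is induced by an automorphism $\sigma$ of $X$, and by the Torelli theorem such $\sigma$ is unique.

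For openness within $\pi\inv(\bD_\rho\setminus\Delta_\rho)$, the key observation is that for any $h\in L^G$ and $x\in\bD_\rho$ one has $h\cdot x=0$: the computation $h\cdot x=\rho(h)\cdot\rho(x)=\zeta_n(h\cdot x)$ together with $\zeta_n\ne1$ forces orthogonality. Hence, given $(X_0,\phi_0)\in\cM_\rho$ with ample witness $\cL_h$ and a small deformation $(X_t,\phi_t)$ in $\pi\inv(\bD_\rho\setminus\Delta_\rho)$, the class $h$ remains of Hodge type $(1,1)$ on $X_t$, so $\cL_h$ deforms to a line bundle $\cL_{h,t}$ on $X_t$ with $\phi_t(\cL_{h,t})=h$. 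Openness of ampleness in proper families then gives $\cL_{h,t}$ ample for $t$ near $0$, and another application of Lemma \ref{lem:im-pi-rho}(1) places $(X_t,\phi_t)$ in $\cM_\rho$.

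To promote $\cM_\rho$ to a fine moduli space, restrict the universal marked K3 family $\cX\to\cM$ to $\cX_\rho:=\cX|_{\cM_\rho}$ and construct the universal non-symplectic automorphism $\sigma_\rho\colon\cX_\rho\to\cX_\rho$ over $\cM_\rho$. Fiberwise, Torelli provides a unique $\sigma_t$ with $\sigma_t^*=\phi_t\inv\circ\rho\circ\phi_t$. To glue these into a holomorphic family automorphism, apply Torelli in families: the flat section $\phi\inv\circ\rho\circ\phi$ of the local system of Hodge isometries preserves each K\"ahler cone fiberwise, hence is induced by a morphism of complex spaces over the base which restricts to $\sigma_t$ on each fiber. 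The universal property is then formal: a family $(\cX,\sigma_B,\phi_S)\to B$ of $\rho$-markable K3s induces a classifying map $B\to\cM$ landing in $\cM_\rho$ by pointwise application of Lemma \ref{lem:im-pi-rho}(1), and the pullback of $(\cX_\rho,\sigma_\rho)$ recovers $(\cX,\sigma_B)$ by uniqueness of the fiberwise $\sigma_t$.

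The main subtle step is the passage from fiberwise automorphisms $\sigma_t$ to a global automorphism of the total space; this relies on Torelli in families (flat Hodge isometries preserving the K\"ahler cones are induced by morphisms of the family). Everything else reduces to the openness calculation above together with standard moduli-theoretic formalism.
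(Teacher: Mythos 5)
Your proposal is correct and follows essentially the same route as the paper: the paper identifies $\cM_\rho$ inside $\pi\inv(\bD_\rho\setminus\Delta_\rho)$ as the locus of chambers $\cC$ with $\cC\cap L^G\ne\emptyset$, which is exactly your condition that some ample class lies in $\phi\inv(L^G)$, and declares this open. You merely spell out what the paper leaves implicit — the openness via deformation of the $\rho$-invariant ample class (using $L^G\perp x$ for $x\in\bD_\rho$) and the assembly of the fiberwise automorphisms into a universal one over $\cM_\rho$ — both of which are carried out correctly.
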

\begin{proof}
  The points of $\cM$ are chambers $\cC$ in
  Equation~\eqref{eq:cC-chambers} over
  $x\in \bD_\rho\setminus\Delta_\rho$. As in the proof of
  Lemma~\ref{lem:im-pi-rho}, one has $\cC\in\cM_\rho$ iff
  $\cC\cap L^G\ne\emptyset$. This is an open condition.
\end{proof}

The restriction of $\pi\colon\cM\to\bD$ gives the period map
$\pi_\rho\colon\cM_\rho\to \bD_\rho\setminus\Delta_\rho$.
The general fiber of $\pi_\rho$ is a torsor over $\Gamma_\rho\cap
(\bZ_2\times W(S_\rho))$. Thus, $\cM_\rho$ is not separated iff there
exists $x\in\bD_\rho\setminus\Delta_\rho$ such that $\Gamma_\rho\cap
W_x \supsetneq \Gamma_\rho\cap W(S_\rho)$. This indeed happens:

\begin{example}\label{ex:triple-cover}
  Consider the $9$-dimensional family of $\mu_3$-covers
  of $\bP^1\times\bP^1$ branched in a curve $B$ of bidegree $(3,3)$,
  studied by Kond\={o} \cite{kondo2002moduli-of-curves-of-genus-4}. In
  this case,
  \begin{displaymath}
    S_\rho=L^G = \big(\Pic (\bP^1\times\bP^1)\big)(3) = H(3)
    \quad\text{and}\quad
    T_\rho=(L^G)^\perp=H \oplus H(3)\oplus E_8^2.
  \end{displaymath}
  Let $\oY$ be a degeneration of the quadric
  $\bP^1\times\bP^1\subset\bP^3$ to a quadratic cone and $\oX\to\oY$
  be the $\mu_3$-cover branched in a curve $\oB\in|\cO_\oY(3)|$ not
  passing through the apex.  Let $Y=\bF_2$ and $X$ be the minimal
  resolutions of $\oY$ and $\oX$. The $\bP^1$-fibration on $Y$ gives
  an elliptic fibration on $X$, and the preimage of the $(-2)$-section
  of $Y$ is a union of three disjoint $(-2)$-sections $e$,
  $\sigma e$, $\sigma^2e$ on $X$, interchanged by the automorphism
  $\sigma$. The invariant sublattice
  $S_X^\sigma = \big(\Pic(\bF_2)\big)(3)=H(3)$ is generated by $f$ and
  $f'=f+\sum_{i=0}^2\sigma^ie$.

  The only $(-2)$-curves on $X$ are $\sigma^ie$ and they do not lie in
  $(S_X^\sigma)^\perp$. Thus, once we fix a marking $\phi$, the period
  $x$ of $X$ will be in $\bD_\rho\setminus\Delta_\rho$. The reflections
  $w_i$ in the roots $\rho^i\phi(e)$ commute. Their product
  $w=w_0w_1w_2$ is non-trivial: on $L^G$ it acts as the reflection 
  that interchanges $\phi(f)$ and $\phi(f')$.
  It is easy to check that $w\in\Gamma_\rho$.  So
  $\Gamma_\rho\cap W_x\ne 1$ and $W(L^G)=1$.
  
  Thus, the map $\cM_\rho\to\bD_\rho\setminus\Delta_\rho$ is not
  separated in this case. Locally it looks like the ``double-headed
  snake'' $\bA^1\cup_{\bA^1\setminus 0}\bA^1 \to \bA^1$ times~$\bA^8$.
  Here is another way to see the same. The positive cone $P$ in
  $H(3)_\bR$ is the unique Weyl chamber for the Weyl group
  $W\big(H(3)\big)=1$; its rays are $\phi(f)$ and $\phi(f')$. The
  hyperplane $\phi(e)^\perp$ cuts it in half.  The intersections of
  the Weyl chambers $\cC\subset P_x\setminus\cup \delta^\perp$ of
  Equation~\ref{eq:cC-chambers} with $P$ are either halves of $P$.
\end{example}

% \vainline{For information:

%   This is a pretty general phenomenon. For K3s with an
%   involution as well, every time there are two disjoint $(-2)$-curves
%   $e$, $\sigma e$, the product $w=w_e\circ w_{\sigma e}$ acts on
%   $L^G$ as a reflection in the $(-4)$-vector $e+\sigma e$, and
%   $w\in W_x\cap\Gamma_\rho$. I've seen this before in
%   [Alexeev-Nikulin].

%   I wonder if this is the only source of nonseparatedness, and
%   $F_\rho$ becomes separated already after contracting the
%   $(-2)$-curves $e$ such that $e$ and $\sigma^ie$ are disjoint.}

% It is clear from the definitions that the moduli space $\cM_\rho$
% of $\rho$-marked K3 surfaces
% admits a period map $\pi_\rho\colon \cM_\rho\to \mathbb{D}_\rho$,
% $(X,\sigma,\phi)\to \phi(H^{2,0}(X))$. There is a natural inclusion
% $\cM_\rho\subset \cM$ by forgetting $\sigma$, and $\pi_\rho$
% is simply the restriction of $\pi$. 

\begin{theorem}
%  \label{naive}
  \label{separated-rho}
  The moduli stack $F_\rho$ of $\rho$-markable K3 surfaces with a
  non-symplectic automorphism is the quotient
  $F_\rho=\mathcal{M}_\rho/\Gamma_\rho$. Its coarse moduli space
  admits a bijective period map to $\dminusrho$, and the coarse moduli
  space of the separated quotient $F_\rho^{\rm sep}$ is $\dminusrho$.
  The generic stabilizer is the group
  \begin{displaymath}
    K_\rho:=\ker(\Gamma_\rho\to \Aut(\bD_\rho))
    / \Gamma_\rho\cap(\bZ_2\times W(S_\rho))    
  \end{displaymath}

\end{theorem}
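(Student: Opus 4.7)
The plan is to establish the four assertions in turn. First, the stack equivalence $F_\rho = \cM_\rho/\Gamma_\rho$ follows by descent: Definition~\ref{def:rho-marking} yields local $\rho$-markings on any $\rho$-markable family, and two such markings of the same family differ by a locally constant isometry of $L$ that, since both conjugate the fiberwise $\sigma_B^*$ to $\rho$, commutes with $\rho$ and hence lies in $\Gamma_\rho$. This exhibits $F_\rho$ as the stack quotient of $\cM_\rho$ by the change-of-marking action.

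For the coarse period map, Lemma~\ref{lem:im-pi-rho}(1) produces a $\rho$-marked K3 with any period $x\in\bD_\rho\setminus\Delta_\rho$, giving surjectivity, while Lemma~\ref{lem:im-pi-rho}(2) shows the fiber over $x$ is a torsor under $\Gamma_\rho\cap(\bZ_2\times W_x)\subset\Gamma_\rho$ and hence collapses to a point modulo $\Gamma_\rho$, giving injectivity. The target $\dminusrho$ is separated as an arithmetic quotient of a Hermitian symmetric (or ball) domain, so the continuous bijection $F_\rho^{\rm coarse}\to\dminusrho$ factors through the coarse space of $F_\rho^{\rm sep}$ by the universal property of the maximal separated quotient, and this factorization is an isomorphism of normal complex spaces because the period map is a local isomorphism at generic points.

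For the generic stabilizer, take a very general $(X,\phi)\in\cM_\rho$ with period $x$. By the Torelli theorem, $\operatorname{Stab}_{\Gamma_\rho}(X,\phi)$ consists of $\gamma\in\Gamma_\rho$ that fix $x$ and preserve the K\"ahler chamber $\phi(\cK_X)$. Because $\Gamma_\rho$ acts properly discontinuously on $\bD_\rho$ via an arithmetic group, the point-stabilizer $\operatorname{Stab}_{\Gamma_\rho}(x)$ equals $\ker := \ker(\Gamma_\rho \to \Aut(\bD_\rho))$ for very general $x$. Put $N := \Gamma_\rho\cap(\bZ_2\times W(S_\rho))$. Then $N\subset\ker$ (reflections in roots of $S_\rho$ and $\pm\id$ act trivially on $T_\rho$, hence on $\bD_\rho$), and $N$ is normal in $\ker$ since $W(S_\rho)\triangleleft O(S_\rho)$ and $\ker$-elements preserve the $\rho$-eigenspace decomposition, hence preserve $S_\rho$. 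At a very general $x$, the torsor action on $\pi_\rho^{-1}(x)$ from Lemma~\ref{lem:im-pi-rho}(2) is realized by $N$, and $N\subset\ker$ acts transitively on the fiber. Orbit--stabilizer then yields $\operatorname{Stab}_{\Gamma_\rho}(X,\phi)\cap N = 1$ and $\operatorname{Stab}_{\Gamma_\rho}(X,\phi)\cdot N = \ker$, so $\operatorname{Stab}_{\Gamma_\rho}(X,\phi)\cong\ker/N = K_\rho$.

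The main obstacle is verifying, at a very general $x$, that $\Gamma_\rho\cap(\bZ_2\times W_x) = N$. This requires a careful analysis of which roots of $L$ are orthogonal to a generic $x\in L_\bC^{\zeta_n}$—namely, those whose $\zeta_n$- and $\bar\zeta_n$-eigenspace components vanish (the reality of $L$ ties these together)—and a determination of which reflections or products of reflections in such roots commute with $\rho$. Once this structural identification is in place, the remainder of the argument is routine orbit--stabilizer bookkeeping.
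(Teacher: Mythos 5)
Your proposal takes the same route as the paper: quotient the period map of Lemma~\ref{lem:im-pi-rho}, use the torsor structure of its fibers to get bijectivity onto $\dminusrho$, and compute the generic stabilizer by orbit--stabilizer inside $\ker(\Gamma_\rho\to\Aut(\bD_\rho))$; your stabilizer computation is in fact more detailed than the paper's one-line derivation. The only point worth addressing is the step you flag as the ``main obstacle,'' namely that $\Gamma_\rho\cap(\bZ_2\times W_x)=\Gamma_\rho\cap(\bZ_2\times W(S_\rho))$ for very general $x$: this is lighter than you suggest. Since $W_x=W(x^\perp\cap L)$ by definition, it suffices to show $x^\perp\cap L=S_\rho$ for very general $x\in\bD_\rho$, which is exactly what the paper observes. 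To see it, write $v=v_S+v_T$ with $v_S\in S_{\rho,\bQ}$ and $v_T\in T_{\rho,\bQ}$; a nonzero rational $v_T$ has nonzero component in every primitive eigenspace of $\rho$ (the Galois group of $\bQ(\zeta_n)/\bQ$ permutes these eigenspaces transitively and fixes $v_T$), so $v_T\cdot x$, which only pairs with the $\bar\zeta_n$-component of $v_T$, is a nonzero linear functional on $T_{\rho,\bC}^{\zeta_n}$, and a very general $x$ avoids the countably many hyperplanes $v_T^{\perp}$; then $v\perp x$ forces $v_T=0$ and $v\in L\cap S_{\rho,\bQ}=S_\rho$ by primitivity of $S_\rho$. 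In particular the second half of your proposed analysis---determining which reflections or products of reflections in roots orthogonal to $x$ commute with $\rho$---is unnecessary, since once $x^\perp\cap L=S_\rho$ is known the equality of the two intersections with $\Gamma_\rho$ is immediate.
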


\begin{proof} The statement is immediate from the definitions and the
  above two Lemmas by quotienting the period map $\pi_\rho$.  The
  points of $\pi_\rho\inv(x)$ are permuted by $\Gamma_\rho$, thus they
  are identified in the $\Gamma_\rho$-quotient. They are also
  identified in the separated quotient.

  For $\rho$ to correspond to any K3 surface with a nonsymplectic
  automorphism, $S_\rho$ must have signature $(1,r-1)$ for some $r$,
  and for $T_\rho$ to have signature $(2,20-r)$. The action of
  $\Gamma_\rho$ on the Type IV domain $\bD(T_\rho)$ factors through
  $O(T_\rho)$ and is therefore properly discontinuous. Thus, the
  action of $\Gamma_\rho$ on $\bD_\rho$ is properly discontinuous,
  and so $\dminusrho$ is makes sense as a complex-analytic space.  (It
  is also quasiprojective by Baily-Borel.)

  The last statement follows
  from Lemma~\ref{lem:im-pi-rho}(2) by noting that for a generic
  $x\in\bD_\rho\setminus\Delta_\rho$ one has $x^\perp\cap L=S_\rho$.
\end{proof}

\begin{remark}\label{rem:dolgachev-kondo}
  The proof of part (1) of Lemma~\ref{lem:im-pi-rho}
  and of Theorem~\ref{separated-rho}
  follow the arguments of Dolgachev-Kondo
  \cite[Thms.~11.2, 11.3]{dolgachev2007moduli-of-k3}. Sections 10 and 11 of
  \cite{dolgachev2007moduli-of-k3} contain a construction of the
  moduli space of K3 surfaces with a non-symplectic automorphism that
  is based on moduli of lattice polarized K3s. But it uses
  \cite[Thm.~3.1]{dolgachev1996mirror-symmetry} which unfortunately is
  false, as was noted in \cite{alexeev2021compact} and as
  Example~\ref{ex:triple-cover} also shows.  For this reason, we
  decided to give an alternative construction.
\end{remark}

\begin{remark}\label{W-quotient}
  Even though the map to $\dminusrho$ in Theorem~\ref{separated-rho}
  is bijective, the coarse moduli space of $F_\rho$ is a non-separated
  algebraic space when $\cM_\rho$ is not separated. This is very
  similar to the algebraic space obtained by dividing a ``two-headed
  snake'' $\bA^1\cup_{\bA^1\setminus 0}\bA^1$ by the involution $z\to-z$ 
  exchanging the heads. The quotient is a non-separated algebraic
  space with a bijection to $\bA^1=\bA^1/\pm$.

  We note that the separated quotient
  $F_\rho^{\rm sep}$ is a stack
  $[\bD_\rho\setminus \Delta_\rho:_W\Gamma_\rho]$ which can be locally
  constructed near $x\in \bD_\rho\setminus \Delta_\rho$ by first
  taking a coarse quotient by the normal subgroup
  $\Gamma_\rho\cap (\bZ_2\times W_x)\unlhd {\rm Stab}_x(\Gamma_\rho)$
  and then taking the stack quotient by
  ${\rm Stab}_x(\Gamma_\rho)/\Gamma_\rho\cap (\bZ_2\times W_x)$.  See
  \cite[Rem.~2.36]{alexeev2021compact}.
\end{remark}

 \begin{proposition}\label{finite-aut}
 Suppose $\sigma\in {\rm Aut}(X)$ fixes a curve $R$ of genus at least $2$,
 i.e. the assumption \ga{} holds.
 Then ${\rm Aut}(X,\sigma)$ is finite. \end{proposition}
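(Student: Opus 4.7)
The plan is to reduce the statement to finiteness of $\Aut(\oX,\oR)$ via two finite-index reductions. First, by the Hodge index remark in the excerpt, $R$ is the \emph{unique} component of $\Fix(\sigma)$ of genus $\ge 2$; since every $\tau\in\Aut(X,\sigma)$ commutes with $\sigma$, it permutes the components of $\Fix(\sigma)$ and so must satisfy $\tau(R)=R$. Restriction therefore yields a homomorphism
\[
 r\colon \Aut(X,\sigma)\longrightarrow \Aut(R),
\]
whose target is finite by Hurwitz since $g(R)\ge 2$. It remains to bound $K:=\ker r$.

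For this I would use the contraction already present in the introduction. Because $R^2=2g-2\ge 2>0$, the divisor $R$ is big and nef, and $\varphi_{|mR|}\colon X\to\oX$ contracts exactly the $(-2)$-curves orthogonal to $R$, realising $\oR=\varphi_{|mR|}(R)$ as an ample Cartier divisor on the ADE K3 surface $\oX$. Any $\tau\in\Aut(X,\sigma)$ preserves $[R]\in\Pic(X)$ and hence permutes the $(-2)$-curves perpendicular to $R$, so it descends uniquely to $\bar\tau\in\Aut(\oX,\oR)$. The descent map is injective: $\varphi_{|mR|}$ is an isomorphism over a dense open subset of $X$, so if $\bar\tau=\id$ then $\tau$ restricts to the identity on a dense open set, hence $\tau=\id$.

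The whole problem therefore reduces to showing $\Aut(\oX,\oR)$ is finite, which follows from the standard fact that the stabilizer of a fixed polarization on a projective variety is an algebraic group of finite type whose Lie algebra embeds in $H^0(\oX,T_{\oX})$. For an ADE K3 surface this group of global vector fields vanishes, so $\Aut(\oX,\oR)$ has trivial connected component and is therefore finite.

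I expect the descent step to be the only nontrivial point; it relies on the exceptional locus of $\varphi_{|mR|}$ being intrinsic to the class $[R]$, together with the fact that an automorphism of $X$ restricting to the identity on a dense open set must be the identity. A self-contained alternative would be to bound $K$ directly via the character $\lambda\colon K\hookrightarrow\bC^\ast$ given by the action on the fibres of $N_{R/X}$, whose image consists of roots of unity of order at most Machida's bound $\varphi(n)\le 20$ on orders of non-symplectic automorphisms; the contraction argument is cleaner because it sidesteps this classification.
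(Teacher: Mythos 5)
Your proposal is correct and follows essentially the same route as the paper: the key step in both is that $\tau$ permutes the components of $\Fix(\sigma)$ and must preserve the unique genus $\ge 2$ component $R$, after which the paper simply invokes finiteness of $\Aut(X,\cO(R))$ while you justify that same finiteness in more detail via the contraction to the ample model and the vanishing of vector fields on a K3. The initial reduction through $r\colon\Aut(X,\sigma)\to\Aut(R)$ and Hurwitz is harmless but superfluous, since your descent argument already applies to all of $\Aut(X,\sigma)$ rather than just $\ker r$.
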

 
 \begin{proof} Let $h\in {\rm Aut}(X,\sigma)$ be an automorphism of $X$ satisfying
 $h\circ\sigma = \sigma\circ h$. Then $h$ permutes the fixed components of $\sigma$.
 Since there is at most one component $R$ of genus $g\ge2$, we conclude $h(R)=R$.
 Hence $h\in {\rm Aut}(X,\mathcal{O}(R))$, a finite group.
 \end{proof}
 
 Note that generic stabilizer $K_\rho$ from
 Theorem~\ref{separated-rho} is never the trivial group, as
 $\rho\in K_\rho$ is a nontrivial element. As this is the automorphism
 group of a generic element $(X,\sigma)\in F_\rho$, if \ga{} holds
 then $K_\rho$ is finite by Proposition~\ref{finite-aut}.

 \begin{example} Consider the double cover 
$\pi\colon X\to \mathbb{P}^2$ branched
 over a smooth sextic $B$. There is a non-symplectic involution $\sigma$ switching
 the two sheets of $X$, acting on $H^2(X,\bZ)$ by fixing $h=c_1(\pi^*\mathcal{O}(1))$
 and negating $h^\perp$. Choosing a model $\rho$ for the action of $\sigma^*$
 on cohomology, we have that $S_\rho=\langle 2\rangle$ and
 $T_\rho=\langle -2\rangle\oplus H^{\oplus 2}\oplus E_8^{\oplus 2}$ are the $(+1)$-
 and $(-1)$-eigenspaces, respectively.

The divisor $\Delta_\rho/\Gamma_\rho\subset \mathbb{D}_\rho/\Gamma_\rho = F_2$
has two irreducible components corresponding to $\Gamma_\rho$-orbits of roots
$\delta\in (T_\rho)_{-2}$. Such an orbit is uniquely determined by the divisibility ($1$ or $2$)
of $\delta\in T_\rho^*$. The case where the divisibility is $2$ corresponds 
to when $B$ acquires a node.
Then there is an involution $\sigma$ on
the minimal resolution of the double cover $X\to \oX\to \mathbb{P}^2$,
but $\sigma^*(\delta)=\delta$, $\sigma^*(h)=h$ and the $(+1,-1)$-eigenspaces of $\sigma^*$
have dimensions $(2,20)$.
Thus, no $\rho$-marking can be extended over a family $\mathcal{X}\to C$ 
with central fiber $X$ and general fiber as above.

When the divisibility of $\delta$ is $1$, $\mathbb{P}^2$ degenerates to $\mathbb{F}_4^0=\bP(1,1,4)$
and the minimal resolution of the double cover $X\to \oX\to \mathbb{F}_4^0$ is an elliptic
K3 surface with $\sigma$ the elliptic involution. Again the eigenspaces have dimension profile
$(2,20)$ and so $(X,\sigma)$ is not $\rho$-markable for the $\rho$ as above.
 \end{example}

\section{Stable pair compactifications}
\label{sec:stable-pair-compactifications}

\subsection{Complete moduli of stable slc pairs}
\label{sec:slc}

We refer the reader to
\cite[Sec.~2B]{alexeev2020compactifications-moduli} and
\cite[Sec.~7D]{alexeev2021compact} for a detailed discussion of
stable K3 surface pairs and their compactified moduli. Briefly:

\begin{definition}
  In our context, a stable slc surface pair is a pair $(S,\epsilon D)$,
  where
\begin{enumerate}
\item $S$ is a connected, reduced, projective Gorenstein surface $S$
  with $\omega_S\simeq\cO_S$ which has semi log canonical
  singularities.
\item $D$ is an effective ample Cartier divisor on $S$ that does not
  contain any log canonical centers of $S$.
\end{enumerate}
\end{definition}

Then for sufficiently small rational number $\epsilon>0$ the pair
$(S,\epsilon D)$ is stable, meaning:
\begin{enumerate}
\item it has semi log canonical singularities, and 
\item the $\bQ$-Cartier divisor $K_S +\epsilon D$ is ample. 
\end{enumerate}
``Sufficiently small'' works in families: for a fixed $D^2$ there
exists $\epsilon_0$ so that if a pair $(S,\epsilon D)$ is stable in
the above definition for some $\epsilon$ then it is stable for any
$0<\epsilon\le \epsilon_0$. 

The main application to K3 surfaces is an observation that for any K3
surface $\oX$ with ADE singularities and an effective ample divisor
$\oR$, the pair $(\oX,\epsilon \oR)$ is stable. Indeed,
$\omega_\oX\simeq\cO_\oX$, the surface $\oX$ has canonical
singularities---which is much better than semi log canonical---and
there are no log centers.

As usual, let $F_{2d}$ denote the moduli space of polarized K3
surfaces $(\oX,\oL)$ with ADE singularities and ample primitive line
bundle $\oL$ of degree $\oL^2=2d$, and $P_{2d,m}\to F_{2d}$ denote
the moduli space of pairs $(\oX,\epsilon\oR)$ with an effective divisor
$\oR\in |m\oL|$. Then the main result for K3 surfaces is the following:

\begin{theorem}\label{thm:slc-moduli}
\begin{enumerate}
\item For the stable pairs as above there exists an algebraic
  Deligne-Mumford moduli stack $\cM\uslc$, with a coarse moduli
  space $M\uslc$.
\item The closure $\oP_{2d,m}\uslc$ of $P_{2d,m}$ in $M\uslc$ is
  projective and provides a  compactification of $P_{2d,m}$
  to a moduli space of stable slc pairs.
\end{enumerate}
\end{theorem}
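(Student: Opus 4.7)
The plan is to reduce both claims to the general theory of KSBA/stable-pair moduli, for which the foundational references in the paper are Koll\'ar's book and the earlier treatments in \cite{alexeev2019stable-pair, alexeev2020compactifications-moduli}. The role of this theorem is really to package a standard black-box input in a form convenient for the rest of the paper, so the ``proof'' is largely an organized appeal to existing machinery.

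For part (1), I would first observe that the pairs under consideration have fixed numerical invariants: $\omega_S\simeq\cO_S$ and $D^2=2dm^2$, with coefficient $\epsilon$ fixed in advance (and chosen smaller than the uniform $\epsilon_0$ guaranteed by the opening discussion). By Koll\'ar's boundedness theorem for stable slc pairs with fixed volume and fixed coefficient set, the family of such $(S,\epsilon D)$ is bounded. Consequently, for some sufficiently divisible $N$, $N(K_S+\epsilon D)$ is very ample with a prescribed Hilbert polynomial, and $(S,\epsilon D)$ embeds into a fixed projective space. The locus in the relevant Hilbert (or Hilbert-pair) scheme parametrizing stable slc pairs is locally closed, and the moduli stack is obtained as the stack quotient by the change-of-basis action of $\mathrm{PGL}$. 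It is Deligne-Mumford because ampleness of $K_S+\epsilon D$ forces $\mathrm{Aut}(S,\epsilon D)$ to be finite for each pair, and the coarse moduli space $M\uslc$ exists by Keel-Mori.

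For part (2), I would identify $P_{2d,m}$ as an open substack of $\cM\uslc$, namely the locus where $S$ has ADE singularities, $\omega_S\simeq\cO_S$, and $D\in|m\oL|$ for a primitive polarization of degree $2d$. Its closure $\oP_{2d,m}\uslc$ is then by construction a closed substack of $M\uslc$. Properness of the closure follows from the valuative criterion: a one-parameter family of stable pairs over a punctured disc extends uniquely to a family of stable slc pairs over the disc, which is precisely Koll\'ar's existence and uniqueness of stable limits for pairs via semistable reduction and the MMP. Projectivity of the coarse moduli space then follows from Koll\'ar's construction of a CM (or Hodge-theoretic) ample line bundle on the moduli space of stable pairs, whose restriction polarizes the proper substack $\oP_{2d,m}\uslc$.

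The main obstacle, and the only step requiring real content, is the properness input: extending a one-parameter family of K3 surface pairs with ADE singularities and ample divisor in $|m\oL|$ across a puncture to a unique family of stable slc pairs $(\oX_0,\epsilon\oR_0)$. This is exactly the semistable reduction plus MMP-for-pairs machinery, and is the deepest ingredient imported from \cite{kollar2021families-of-varieties}. In the K3 setting the limiting ``Type II/III'' pairs have moreover been analyzed explicitly in \cite{alexeev2019stable-pair, alexeev2020compactifications-moduli}, so no new subtleties beyond the general theory arise for the families relevant to this paper.
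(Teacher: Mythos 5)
The paper gives no proof of this theorem at all: it is imported as a black box from the KSBA literature, with pointers to \cite{kollar2021families-of-varieties} for the general case and to \cite{alexeev2019stable-pair, alexeev2020compactifications-moduli} for the simpler special case of K3 pairs with small coefficient. Your outline (boundedness, local closedness in a Hilbert scheme, $\mathrm{PGL}$-quotient, finiteness of automorphisms and Keel--Mori for part (1); stable reduction for properness and the CM line bundle for projectivity in part (2)) is a faithful summary of exactly the argument those references carry out, so it matches the paper's intended justification.
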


To apply this result to a compactification of $F_\rho^{\rm sep}$ one
needs to choose, in a canonical manner, a big and
nef divisor on the generic $(X,\sigma)\in F_\rho$.

\begin{definition}
  A {\it canonical choice of polarizing divisor} is an algebraically
  varying big and nef divisor $R$ defined over a Zariski dense subset
  $U\subset F_\rho$ of the moduli space of $\rho$-markable K3 surfaces. 
\end{definition}

\subsection{Stable pair compactification of $F_\rho^{\rm sep}$}
\label{sec:slc-F}

We apply Theorem~\ref{thm:slc-moduli} to construct a stable pair
compactification in the present context as follows.

Suppose that for each surface $(X,\sigma)\in F_\rho$ 
assumption~\ga{} holds, i.e. the fixed locus $\Fix(\sigma)$
contains a component $C_1$ of genus $g\ge2$, as well as possibly
several smooth rational curves $C_i$ and some isolated points. In fact,
it suffices that a single $(X,\sigma)\in F_\rho$ satisfies assumption~\ga{}
because the genus of $C_1$ is constant in a family of smooth K3 surfaces
with non-symplectic automorphism. So $R=C_1$ gives a
canonical choice of polarizing divisor for all of $U=F_\rho$.

Let $\pi\colon X\to\oX$ be the contraction to an ADE K3 surface such
that the divisor $\oR:=\pi(C_1)$ is ample; it has degree
$\oR^2=2g(C_1)-2>0$. It provides us with an ample
divisor on $\oX$.
If $\cO(\oR)=\oL^m$ for a primitive $\oL$ then the pair
$(\oX,\cO(\oR))$ is a point of $F_{2d,m}$ and the pair
$(\oX,\epsilon\oR)$ is a point of $P_{2d,m}$.

\begin{definition}\label{def:slc_M-pol_nonsymp}
  We define the map $\psi\colon  F_\rho \to P_{2d,m}$ as
  follows. Pointwise, it sends $(X,\sigma)$ to $(\oX,\epsilon\oR)$. 
  In every flat family $f\colon \cX\to S$
  of K3 surfaces with automorphism, the sheaf
  $\cO_\cX(\cR)$ is relatively big and nef. Since $R^i\cL^d=0$
  for $i>0$, $d>0$, it gives a contraction to a flat family
  $\bar f\colon(\ocX,\ocR)\to S$.
  This induces the map on moduli.
\end{definition}

\begin{lemma}\label{lem:finite_slc}
  The map $\psi\colon  F_\rho \to P_{2d,m}$ defined above induces an
  injective map 
  $F_\rho^{\rm sep}\to {\rm im}(\psi)$.
\end{lemma}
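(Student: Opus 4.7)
The plan is to show that if $(\bar X_1, \epsilon \bar R_1) \cong (\bar X_2, \epsilon \bar R_2)$ as stable pairs, then $(X_1,\sigma_1)$ and $(X_2,\sigma_2)$ already coincide in $F_\rho^{\rm sep}\cong \dminusrho$. The idea is to lift an isomorphism of pairs through the contraction $\pi\colon X\to\oX$, transport one automorphism to the other surface, and then invoke rigidity of symplectic $K3$ actions to pin down that it equals the given automorphism there; the identification of periods follows.

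First I would take an isomorphism $\bar f\colon(\oX_1,\epsilon\oR_1)\to(\oX_2,\epsilon\oR_2)$ (the weight $\epsilon$ is irrelevant for isomorphisms) and lift it to an isomorphism $f\colon X_1\to X_2$ of minimal resolutions. Since each $\oX_i$ has only ADE singularities, the minimal resolution is canonical, so $f$ exists and is unique; since $R_i$ is the normalization of the genus $g\ge2$ component $\oR_i$, the lift satisfies $f(R_1)=R_2$. Transport gives $\tau:=f^{-1}\sigma_2 f\in\Aut(X_1)$, a nonsymplectic automorphism of order $n$ with $\tau^*\omega_{X_1}=\zeta_n\omega_{X_1}$ (after choosing $\omega_{X_1}=f^*\omega_{X_2}$) that fixes $R_1$ pointwise.

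The heart of the argument, and the step I expect to be the main obstacle, is showing $\tau=\sigma_1$. Consider $\mu:=\sigma_1\tau^{-1}$. It acts trivially on $\omega_{X_1}$, so it is symplectic, and it fixes $R_1$ pointwise because both factors do. Moreover $\mu$ preserves $R_1$ and hence lies in the finite group $\Aut(X_1,\cO(R_1))$ used in the proof of Proposition~\ref{finite-aut}, so it has finite order. But by a classical theorem of Nikulin, any symplectic automorphism of finite order $>1$ of a K3 surface has only isolated fixed points; since $R_1$ has positive genus this forces $\mu=\id$, i.e.\ $\tau=\sigma_1$.

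To close the argument I would set $\phi_1':=\phi_2\circ (f^{-1})^*\colon H^2(X_1,\bZ)\to L$. The identification $\tau=\sigma_1$ gives $(\phi_1')^{-1}\rho\phi_1'=\sigma_1^*$, so $\phi_1'$ is a $\rho$-marking of $(X_1,\sigma_1)$, and by construction its period $\phi_1'(H^{2,0}(X_1))$ equals $\phi_2(H^{2,0}(X_2))$. Since any two $\rho$-markings of $(X_1,\sigma_1)$ differ by an element of $\Gamma_\rho$, the periods of $(X_1,\sigma_1)$ and $(X_2,\sigma_2)$ lie in the same $\Gamma_\rho$-orbit, and the two pairs define the same point of $F_\rho^{\rm sep}$ by Theorem~\ref{separated-rho}.
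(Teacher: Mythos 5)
Your proof is correct and follows essentially the same route as the paper: both reduce to showing that the symplectic automorphism $\sigma_1^{-1}f^{-1}\sigma_2 f$ (your $\mu$ is just a variant of it) fixes the positive-genus curve $R_1$ pointwise, has finite order because it preserves $\cO_{X_1}(R_1)$, and hence is the identity by Nikulin's theorem that a finite-order symplectic automorphism of a K3 surface has finite fixed locus. The only point you leave implicit is the preliminary observation that $\psi$ factors through $F_\rho^{\rm sep}$ in the first place (because $P_{2d,m}$ is separated), which is needed before injectivity of the induced map even makes sense.
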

\begin{proof}
The map $\psi$ factors through the separated
  quotient of $F_\rho$ because $P_{2d,m}$ is separated.
  Now suppose there is an isomorphism of pairs $\overline{f}\colon
  (\oX_1,\oR_1)\to (\oX_2,\oR_2)$ inducing an isomorphism of the
  minimal resolutions $f\colon (X_1,R_1)\to (X_2,R_2)$. Consider the
  morphism $\varphi = \sigma_1\inv  f\inv  \sigma_2 
  f$. Then $\varphi$ is a \emph{symplectic} automorphism of $X_1$
  fixing the curve $R_1$ pointwise.
  Since $\varphi$ preserves $\cO_{X_1}(R_1)$, it has finite order.
  By \cite{nikulin1979finite-groups} the fixed set of a
  finite order symplectic K3 automorphism is finite. 
  Thus, $\varphi=\id$ and $f$ preserves the group action.
  So, $(X,\sigma)$ is uniquely determined by $(\oX,\oR)$.
\end{proof}

\begin{remark}\label{sep-moduli}
  $F_\rho^{\rm sep}$ itself has a moduli interpretation: It is the
  moduli space $F_\rho\uade$ of $ADE$ K3 surfaces $(\oX,\overline{\sigma})$ with
  automorphism, for which ${\rm Fix}(\overline{\sigma})$ is ample, and for
  which the minimal resolution $(X,\sigma)\to (\oX,\overline{\sigma})$ is
  $\rho$-markable. 
\end{remark}

\begin{definition}
  Let $Z= {\rm im}(\psi)$ and let $\oZ$ be its closure in
  $\oP_{2d,m}\uslc$, with reduced scheme structure. The stable pair compactification
  $$F_\rho^{\rm sep} = F_\rho\uade \hookrightarrow \oF_\rho\uslc$$ is defined
  as the normalization of $\oZ$.
\end{definition}

% \philinline{Perhaps we should define $\oF_\rho\uslc$ as the
%   un-normalized compactification $\overline{Z}$ and then the theorem
%   should be that the normalization is semi-toroidal. This would fit
%   better with the current status of my paper with Valery. This would
%   require a careful read-through to insert or remove the word
%   normal/normalization where appropriate.}

In particular, $\oF_\rho\uslc$ is normal by definition.  Points
correspond to the pairs $(\oX,\epsilon\oR)$, possibly degenerate, with
some finite data.

\subsection{Kulikov degenerations of K3 surfaces}
\label{sec:kulikov}

A basic tool in the study of degenerations of K3 surfaces is Kulikov
models. We use them in the argument below, so we briefly
recall the definition.

Let $(C,0)$ denote the germ of a smooth curve at a
point $0\in C$ and let $C^*=C\setminus 0$. Let $X^*\to C^*$ be
a family of algebraic K3 surfaces. 

\begin{definition}
A {\it Kulikov model} $X\to (C,0)$ is an extension of $X^*\to C^*$ for
which $X$ is a smooth algebraic space,
$K_X\sim_C 0$, and $X_0$ has reduced normal crossings.
We say the $X$ is {\it Type I, II, or III}, respectively, depending
on whether $X_0$ is smooth, has double curves but no triple
points, or has triple points, respectively. We call the central
fiber $X_0$ of such a family a {\it Kulikov surface}.
\end{definition}

A key result on the degenerations of K3 surfaces
is the theorem of Kulikov \cite{kulikov1977degenerations-of-k3-surfaces}
and Persson-Pinkham \cite{persson1981degeneration-of-surfaces}:

\begin{theorem}
Let $Y^*\to C^*$ be a family of algebraic K3 surfaces.
Then there is a finite base change $(C',0)\to (C,0)$ and a sequence of
birational modifications of the pull back $Y'\dashrightarrow X$
such that $X$ has smooth total space, $K_X\sim_{C'} 0$, and $X_0$ has
reduced normal crossings.
\end{theorem}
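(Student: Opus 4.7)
The plan is to combine semistable reduction with Kulikov's birational modifications to simultaneously achieve the normal crossings condition on the central fiber and the triviality of the relative canonical class.

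First, I would apply the semistable reduction theorem of Kempf-Knudsen-Mumford-Saint-Donat to $Y^* \to C^*$. After a ramified base change $(C',0) \to (C,0)$ of sufficiently high order and a sequence of blow-ups of the pullback $Y'$, one obtains a smooth threefold $X^{(1)} \to C'$ whose central fiber $X^{(1)}_0$ is a reduced normal crossings divisor. Since the general fiber is a K3 surface with $K=0$, the canonical class $K_{X^{(1)}}$ is trivial on $C' \setminus \{0\}$, so as a divisor class one has $K_{X^{(1)}} \sim \sum_i a_i V_i$ for integers $a_i$, where the $V_i$ are the irreducible components of $X^{(1)}_0$.

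Second, I would perform a sequence of further birational modifications, preserving both smoothness of the total space and the normal crossings structure of the central fiber, in order to reduce to the case $K_X \sim_{C'} 0$. The main tool is an \emph{elementary modification}: if $V_i$ is a component on which $K_{X^{(1)}}|_{V_i}$ is not numerically trivial, then adjunction together with the surface classification forces $V_i$ to be ruled over a curve in a manner compatible with a double curve $V_i \cap V_j$; blowing up a fiber of this ruling and then blowing back down in the other direction produces a new semistable model in which an appropriate complexity invariant (e.g., $\sum_i |a_i|$ weighted by the number of ruled components) strictly decreases. Combined with contractions of exceptional components whose adjunction class is negative, and flops when necessary at triple points, this iterative procedure eventually terminates at a Kulikov model $X \to C'$ with $K_X \sim_{C'} 0$.

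The hardest part is showing that this iterative procedure terminates while preserving smoothness and the normal crossings structure throughout. This requires a careful case analysis based on the adjunction-driven classification of surfaces that can occur as components of $X^{(1)}_0$ (K3, ruled over a curve, or rational), together with a combinatorial invariant that strictly decreases under each elementary modification and controls the evolution of the double-curve configuration via the triple point formula. This case analysis is the core technical content of the original proofs of Kulikov and Persson-Pinkham.
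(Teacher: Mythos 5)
This theorem is quoted in the paper without proof, as the classical result of Kulikov and Persson--Pinkham, and your outline reproduces exactly the approach of those cited sources: semistable reduction \`a la Kempf--Knudsen--Mumford--Saint-Donat followed by elementary modifications and flops to kill the coefficients of $K$ along the components of the central fiber, with the termination and case analysis deferred to the originals. The sketch is correct in its architecture (one minor point: semistable reduction is applied to a proper extension of $Y^*$ over $C$, which must first be chosen), and since the paper itself offers no argument, there is nothing further to compare.
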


We recall some fundamental results about Kulikov models.
The primary reference is \cite{friedman1986type-III}.
Let $T:H^2(X_t,\bZ)\to H^2(X_t,\bZ)$ denote the Picard-Lefschetz
transformation associated to an oriented simple loop in
$C^*$ enclosing $0$. Since $X_0$ is reduced normal crossings,
$T$ is unipotent. Let $$N:=\log T = (T-I)-\tfrac12 (T-I)^2 +\cdots$$
be the logarithm of the monodromy. 

\begin{theorem}\label{thm:pic-lef}\cite{friedman1986type-III}\cite{friedman1984a-new-proof}
Let $X\to (C,0)$ be a Kulikov model. We have that
 \begin{enumerate} 
\item[] if $X$ is Type I, then $N=0$,
\item[] if $X$ is Type II, then $N^2=0$ but $N\neq 0$,
\item[] if $X$ is Type III, then $N^3=0$ but $N^2\neq 0$.
\end{enumerate} 
The logarithm of monodromy is integral, and of the form
$Nx = (x\cdot \lambda)\delta-(x\cdot \delta)\lambda$ for $\delta\in H^2(X_t,\bZ)$
a primitive isotropic vector, and $\lambda\in \delta^\perp/\delta$ satisfying
$$\lambda^2=\#\{\textrm{triple points of }X_0\}.$$ When $\lambda^2=0$,
its imprimitivity is the number of double curves of $X_0$.
\end{theorem}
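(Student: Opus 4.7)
The plan is to treat the three types separately: Type I is immediate from Ehresmann, and for Types II and III I would combine the Clemens-Schmid/Steenbrink machinery for the limit mixed Hodge structure with the Kulikov constraint $K_X\sim_C 0$. For Type I, smoothness of the total space together with smoothness of $X_0$ makes $X\to C$ a proper submersion, hence a $C^\infty$ fiber bundle by Ehresmann, so the monodromy $T$ acts trivially on $H^2(X_t,\bZ)$ and $N=0$.

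For Types II and III, I would compute the weight filtration of the limit mixed Hodge structure on $H^2_{\lim}(X_t,\bQ)$ using the Steenbrink spectral sequence, whose $E_1$-page is built from the cohomologies $H^*(X_0^{(k)})$ of the codimension-$(k-1)$ strata of $X_0$. Since $\dim X_0=2$, only $X_0^{(1)}, X_0^{(2)}, X_0^{(3)}$ can contribute, so the weights of $H^2_{\lim}$ lie in $\{0,1,2,3,4\}$. The Kulikov condition $K_X\sim_C 0$ forces a specific shape: in Type II, where triple points are absent, the maximal weight is $3$; in Type III, triple points contribute a one-dimensional $\mathrm{Gr}^W_4$ (and Poincar\'e-dually a $\mathrm{Gr}^W_0$). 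By Schmid's $\mathfrak{sl}_2$-theorem, $N$ acts as a map $\mathrm{Gr}^W_k\to \mathrm{Gr}^W_{k-2}(-1)$ which is an isomorphism on primitive parts, so the index of nilpotency of $N$ equals the maximal weight minus one. This yields $N^2=0$ in Type II, while $N^3=0$ and $N^2\ne 0$ in Type III.

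For the explicit shape of $N$: since monodromy preserves the cup product, $N$ is skew-adjoint with respect to the intersection pairing, and the Hodge-theoretic calculation above shows it has rank at most $2$. Any rank-two nilpotent skew-adjoint integral operator on $H^2(X_t,\bZ)$ can be written as $Nx=(x\cdot\lambda)\delta-(x\cdot\delta)\lambda$ for some $\delta,\lambda\in H^2(X_t,\bZ)$: one picks a primitive generator $\delta$ of the image of $N$, uses skew-adjointness to produce a dual $\lambda$, and verifies the formula on the quotient. Short algebraic manipulations then show that the nilpotency constraints translate to $\delta^2=0$ and $\delta\cdot\lambda=0$ in all cases, with $\lambda^2=0$ in Type II and $\lambda^2\ne 0$ in Type III. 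To identify $\lambda^2$ with $\#\{\text{triple points of }X_0\}$ and (when $\lambda^2=0$) the imprimitivity of $\lambda$ with the number of double curves, I would invoke the Clemens retraction $X\to X_0$, which realizes the vanishing cycles as torus fibers over the dual complex of $X_0$; a local Picard-Lefschetz computation at each triple point contributes $+1$ to $\lambda^2$, while each irreducible double curve contributes a factor to the divisibility of $\lambda$ in $\delta^\perp/\delta$.

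The main obstacle is this last identification. The bounds on $N^k$ follow formally from the LMHS analysis, but matching the integral invariants $\lambda^2$ and the imprimitivity of $\lambda$ to the combinatorics of triple points and double curves requires a careful local analysis of the Clemens collapse together with the Mayer-Vietoris sequence for the dual complex, as carried out in detail by Friedman.
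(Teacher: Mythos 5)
The paper offers no proof of this statement: Theorem~\ref{thm:pic-lef} is recalled verbatim from Friedman--Scattone \cite{friedman1986type-III} and Friedman \cite{friedman1984a-new-proof}, and the authors simply cite those sources. So there is no in-paper argument to compare against; the only meaningful comparison is with the cited references, and your sketch does follow their standard line: Ehresmann for Type I, the Clemens--Schmid/Steenbrink limit mixed Hodge structure and the monodromy-weight isomorphisms $N^k\colon \mathrm{Gr}^W_{2+k}\isoto\mathrm{Gr}^W_{2-k}$ for the nilpotency indices, skew-adjointness plus the rank-$2$ computation for the shape $Nx=(x\cdot\lambda)\delta-(x\cdot\delta)\lambda$, and the Clemens collapse for the combinatorial identification of $\lambda$.

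Two caveats on completeness. First, the lower bounds ($N\ne0$ in Type II, $N^2\ne0$ in Type III) do not follow from the Steenbrink weight bounds alone; they need $\mathrm{Gr}^W_3\ne0$, resp.\ $\mathrm{Gr}^W_4\ne0$, which in turn rest on the Kulikov--Persson--Pinkham structure of the central fiber (elliptic double curves in Type II, dual complex $S^2$ in Type III) rather than merely on the presence or absence of triple points; your phrase ``the Kulikov condition forces a specific shape'' is gesturing at exactly this, but it is the classification of $X_0$, not just $K_X\sim_C 0$, that is being used. Second, the passage from ``rank-two nilpotent skew-adjoint'' to the displayed integral formula with $\delta$ primitive isotropic and $\lambda\in\delta^\perp/\delta$, and above all the identifications $\lambda^2=\#\{\text{triple points}\}$ and (when $\lambda^2=0$) imprimitivity $=$ number of double curves, constitute the actual content of \cite{friedman1986type-III}; you have correctly isolated this as the hard step and deferred it, which is honest but means the proposal is an outline of the known proof rather than a self-contained one. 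Given that the paper itself treats the result as black-boxed background, this is an acceptable level of detail.
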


Thus, the Types I, II, III of Kulikov model are distinguished by the
behavior of the {\it monodromy invariant} $\lambda$: either
$\lambda=0$, $\lambda^2=0$ but $\lambda\neq 0$, or $\lambda^2\neq 0$ respectively.

\begin{definition}\label{J-latt}
Let $J\subset H^2(X_t,\bZ)$ denote the primitive
isotropic lattice $\bZ\delta$ in Type III or the saturation of
$\bZ\delta\oplus \bZ\lambda$ in Type II. 
\end{definition}

\subsection{Baily-Borel compactification}
\label{sec:baily-borel}

Let $N$ be a lattice of signature $(2,\ell)$, together with an isometry
$\rho\in O(N)$ of finite order $n$, such that all eigenvalues of $\rho$
on $N_\bC$ are primitive $n$th roots of unity, and $N_\bC^{\zeta_n}$
contains a vector $x$ of positive Hermitian norm $x\cdot \bar x$.
This is the situation which arises for a non-symplectic automorphism
of an algebraic K3 surface, with $N=T_\rho$. Then we have a Type IV
domain
$$\bD_N= \bP\{x\in N_{\bC}\mid x\cdot x=0,\ x\cdot \bar x>0\}$$
For $n=2$ one has $\bD_\rho=\bD_N$. For $n>2$ one has a Type I
subdomain of $\bD_N$
$$\bD_\rho= \bP\{x\in N_{\bC}^{\zeta_n}\mid x\cdot \bar x>0\}$$
$\bD_\rho$ admits the action of the arithmetic group
$\widetilde{\Gamma}_\rho:=\{\gamma\in O(N)\mid \gamma\circ \rho = \rho\circ \gamma\}$.
Fix a finite index subgroup $\Gamma \subset \widetilde{\Gamma}_\rho$.

Recall that $\bD_N$ and $\bD_\rho$ embed into their compact duals
$\bD_N^c$, $\bD_\rho^c$, which are defined by dropping the condition
that $x\cdot \bar x>0$. Define $\obD_N\subset \bD_N^c$,
$\obD_\rho\subset \bD_\rho^c$ as their topological closures.
One has a well known description of the rational boundary components
of $\bD_N$, see e.g. see
\cite{looijenga2003compactifications-defined2}. 

\begin{definition}
  A {\it rational boundary component} of $\bD_N$ is an analytic subset
  $B_J\subset~\obD_N$ of the form: 
\begin{enumerate}
\item $(\bP J_\bC \setminus \bP J_\bR)\cap \obD_N$ for ${\rm rk}\,J=2$
a primitive isotropic sublattice of $N$,
\item $\bP J_\bC \cap \obD_N$ for ${\rm rk}\,J=1$ a primitive
isotropic sublattice of $N$.
\end{enumerate}

The rational boundary components of $\bD_\rho$ are intersections of
$B'_J=B_J\cap\obD_\rho$.

One defines the {\it rational closure} of $\bD_\rho$ to be
$\obD_\rho\ubb:=\bD_\rho\cup_J B'_J,$,
topologized via a horoball topology at the boundary.
Then the {\it Baily-Borel compactification} of $\bD_\rho/\Gamma$ is (at least topologically)
$\overline{\bD_\rho/\Gamma}\ubb := \obD_\rho\ubb/\Gamma$.
\end{definition}

The space $\overline{\bD_\rho/\Gamma}\ubb$ was shown to have the structure
of a projective variety by Baily-Borel
\cite{baily1966compactification-of-arithmetic}.
For Type IV
domains $\bD_N$ and $\bD_\rho$ if $n=2$, the boundary components (1)
are isomorphic to 
$\bH\sqcup (-\bH)$ and the boundary components (2) are points.
For $n>2$, the boundary components of the Type I domain $\bD_\rho$ are
points. If $\rk J=2$ then a point $[x]\in B_J$ corresponds to the elliptic
curve $E_x=J_\bC/(J+\bC x)$.

\begin{lemma}\label{lem:ords346} In the case $n>2$, for each boundary
  component $B_J'$ we necessarily have ${\rm rk}\,J=2$ and
  $n\in \{3,4,6\}$, and $x\in B'_J$ corresponds to the elliptic curve
  with $j(E_x)=0$ if $n=3$ or $6$, and with $j(E_x)=1728$ if $n=4$.
\end{lemma}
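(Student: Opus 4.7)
Fix a boundary component $B'_J = B_J \cap \obD_\rho$ and pick a point $[x] \in B'_J$ represented by a nonzero $\zeta_n$-eigenvector $x \in J_\bC$ of $\rho$. My plan is first to show that $\rho$ preserves $J$, then to restrict $\rho$ to the rank-$2$ lattice $J$ and exploit the fact that finite-order elements of $GL_2(\bZ)$ have very restricted orders and eigenvalue structures. The induced action on the elliptic curve $E_x$ will then pin down its $j$-invariant.

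The case ${\rm rk}\,J = 1$ is immediately excluded: if $J = \bZ j$, then $[x] = [j]$ is a single real point of $\bP N_\bC$, and membership in $\bP N_\bC^{\zeta_n}$ would force $\rho(j) = \zeta_n j$, which is incompatible with $\rho \in O(N)$ being real since $n > 2$. For ${\rm rk}\,J = 2$, set $J' = \rho(J)$. Then $\rho(x) = \zeta_n x$ lies in $J_\bC \cap J'_\bC = (J \cap J')_\bC$, where primitivity of $J$ and $J'$ guarantees the last equality. If $J' \neq J$, the intersection $J \cap J'$ has rank at most $1$, so $x$ would be proportional to a real vector; but a $\zeta_n$-eigenvector with $n > 2$ cannot be proportional to a real vector, again by the reality of $\rho$. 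Hence $\rho(J) = J$.

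Consequently $\rho|_J$ is a finite-order element of $GL(J) \cong GL_2(\bZ)$ whose complexification has $\zeta_n$ as an eigenvalue. Finite-order elements of $GL_2(\bZ)$ can only have orders in $\{1, 2, 3, 4, 6\}$, so $n \in \{3, 4, 6\}$. For each such $n$ one has $\varphi(n) = 2$, so the integral characteristic polynomial of $\rho|_J$ must equal the cyclotomic polynomial $\Phi_n$, and the eigenvalues of $\rho|_{J_\bC}$ are $\zeta_n$ and $\zeta_n^{-1}$. Since $\rho$ preserves both $J$ and the line $\bC x$, it descends to an automorphism of $E_x = J_\bC/(J + \bC x)$ acting on the one-dimensional quotient $J_\bC/\bC x$ by the other eigenvalue $\zeta_n^{-1}$, which has exact order $n$. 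Therefore $E_x$ admits an automorphism of order $n$, which forces $j(E_x) = 0$ when $n \in \{3, 6\}$ and $j(E_x) = 1728$ when $n = 4$.

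The main obstacle is the invariance step $\rho(J) = J$ in the rank-$2$ case; everything afterward reduces to standard facts about finite subgroups of $GL_2(\bZ)$ and automorphism groups of elliptic curves. A minor auxiliary verification is that $J$ maps to an honest lattice in $J_\bC/\bC x$ (so that $E_x$ is truly an elliptic curve), which amounts to $J_\bR \cap \bC x = 0$ — yet another consequence of the fact that $x$ is not proportional to a real vector.
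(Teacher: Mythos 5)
Your proof is correct and follows essentially the same route as the paper's: establish $\rho(J)=J$, invoke the classification of finite-order elements of $GL_2(\bZ)$ to get $n\in\{3,4,6\}$, and use the induced order-$n$ automorphism of $E_x$ to pin down the $j$-invariant. The only (harmless) difference is that the paper deduces ${\rm rk}\,J=2$ and $\rho$-invariance simultaneously from the decomposition $J_\bC=J_\bC^{\zeta_n}\oplus J_\bC^{\overline{\zeta}_n}$, whereas you exclude rank $1$ and prove invariance by separate direct arguments.
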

\begin{proof}
  % By rank considerations, $J_\bC\cap N_\bC^{\zeta_n}\neq \emptyset$
  % and $J$ being integral imply
  If $B'_J$ is boundary component of $\bD_\rho$ then
  $N_\bC^{\zeta_n}\cap J_\bC\ne0$. Since $J$ is defined over $\bZ$ and
  $\zeta_n\notin\bR$, then $N_\bC^{\overline{\zeta}_n}\cap J_\bC\ne0$
  as well.  This implies that $\rk J=2$ and
  $$J_\bC = J_\bC^{\zeta_n}\oplus J_\bC^{\overline{\zeta}_n}.$$
  Thus, $\rho(J_\bC)=J_\bC$, implying that $\rho(J)=J$.
  Therefore $\rho\big{|}_J\in\GL(J)\cong\GL_2(\bZ)$ necessarily has
  order $n$. Thus, $n\in \{3,4,6\}$.  For a point $[x]\in B_J'$ one has
  $x\in N_\bC^{\zeta_n}$ and $\mu_n\subset\Aut(E_x)$. This
  determines~$E_x$.
\end{proof}

\begin{corollary}\label{cor:ords2346}
  If $n\ne 2,3,4,6$ then the rational closure of $\bD_\rho$ is
  simply $\bD_\rho$ itself. So $\bD_\rho/\Gamma$ is already compact.
\end{corollary}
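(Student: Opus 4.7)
The plan is to observe that the corollary is an immediate consequence of Lemma \ref{lem:ords346}. Since the standing assumption throughout the paper is $n>1$, the hypothesis $n \neq 2$ forces $n > 2$, and the lemma applies: every rational boundary component $B'_J$ of $\bD_\rho$ must satisfy ${\rm rk}\, J = 2$ and must force $n \in \{3,4,6\}$. Since the remaining hypothesis excludes precisely these three values, there are no rational boundary components at all.

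By the definition of the rational closure, $\obD_\rho\ubb = \bD_\rho \cup_J B'_J$ therefore collapses to $\bD_\rho$ itself. Passing to the $\Gamma$-quotient gives $\overline{\bD_\rho/\Gamma}\ubb = \bD_\rho/\Gamma$. The Baily-Borel theorem \cite{baily1966compactification-of-arithmetic} asserts that this space carries the structure of a projective variety, and hence is compact; but since it equals $\bD_\rho/\Gamma$, we conclude that the latter is already compact.

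There is essentially no obstacle in this argument beyond the already-proved Lemma \ref{lem:ords346}; the corollary is really just a restatement of that lemma from the point of view of the compactification. One may pause to note that the list of $n > 1$ with $n \neq 2,3,4,6$, $\varphi(n) \le 20$, and $n\neq 60$ (per \cite{machida1998on-k3-surfaces}) is finite and explicit, so the corollary identifies an effective list of orders for which no compactification of $\bD_\rho/\Gamma$ is needed.
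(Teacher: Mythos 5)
Your argument is correct and is exactly the one the paper intends: the corollary is stated without proof as an immediate consequence of Lemma~\ref{lem:ords346}, and your deduction (no boundary components when $n>2$ and $n\notin\{3,4,6\}$, hence the rational closure is $\bD_\rho$ and the Baily--Borel quotient, which is projective, coincides with $\bD_\rho/\Gamma$) fills in precisely the intended steps. No issues.
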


The following is a
well-known consequence of Schmid's nilpotent orbit theorem:

\begin{proposition} Let $X^*\to C^*$ be a degeneration
of a $\rho$-markable K3 surfaces over a punctured analytic disk $C^*$.
A lift of the period mapping $\widetilde{C^*}\cong \bH\to \bD_\rho$
approaches the Baily-Borel cusp $B_J$ as ${\rm Im}(\tau)\to \infty$,
where $J$ is the monodromy lattice in $H^2(X_t,\bZ)$, cf. 
Definition \ref{J-latt}. When ${\rm rk}(J)=2$, the limiting point $x\in B_J$ corresponds to
an elliptic curve $E_x$ isomorphic to any double curve of the central
fiber $X_0$ of a Kulikov model $X\to C$. \end{proposition}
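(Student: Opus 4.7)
The plan is to reduce both assertions to Schmid's nilpotent orbit theorem. After a finite base change we may assume the local monodromy $T$ on $H^2(X_t,\bZ)$ is unipotent, so that $N=\log T$ is nilpotent and, by Theorem \ref{thm:pic-lef}, satisfies $N^3=0$. Schmid's theorem then produces a limiting Hodge filtration $F^\bullet_\infty$ on $H^2(X_t,\bC)$ and an asymptotic expansion
\[ \Phi(\tau)=\exp(\tau N)\,\Psi(e^{2\pi i\tau}), \]
with $\Psi$ holomorphic at $0$, for which the nilpotent orbit $\exp(\tau N)F^\bullet_\infty$ approximates $\Phi$ as $\mathrm{Im}(\tau)\to\infty$.

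First I would check that the Hodge line $F^2_\infty\subset H^2(X_t,\bC)$ lies in $J_\bC$. Two ingredients combine: the limit filtration is opposed to the monodromy weight filtration by the $\mathrm{SL}_2$-orbit theorem, and the explicit formula $Nx=(x\cdot\lambda)\delta-(x\cdot\delta)\lambda$ from Theorem \ref{thm:pic-lef} identifies the relevant graded piece of the weight filtration with $J_\bC$. Hence $[F^2_\infty]\in\bP(J_\bC)\cap\overline{\bD}$, which is exactly the rational boundary component $B_J$ (or $B'_J$ after restriction to $\overline{\bD}_\rho$). By definition of the horoball topology on the Baily--Borel compactification, the nilpotent orbit converges to this point as $\mathrm{Im}(\tau)\to\infty$, yielding the first claim.

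For the elliptic curve identification when $\mathrm{rk}\,J=2$, I would compare two descriptions of the same weight-one Hodge structure. On one side, the Clemens--Schmid exact sequence identifies the graded piece $\mathrm{Gr}^W_3$ of the limit mixed Hodge structure on $H^2$ with $H^1(D)(-1)$, where $D$ is a double curve of the Kulikov fiber $X_0$. On the other side, the monodromy weight filtration equips $J$ with a pure weight-one Hodge structure whose Hodge line is $\bC x\subset J_\bC$, so that the associated elliptic curve is $E_x=J_\bC/(J+\bC x)$. An $N$-equivariant, polarization-compatible identification $J(-1)\cong\mathrm{Gr}^W_3$ then produces the isomorphism $E_x\cong D$.

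The principal obstacle is constructing this last identification canonically. I would use the explicit geometric structure of Type II Kulikov surfaces from \cite{friedman1986type-III}, in which the components of $X_0$ meet along a common smooth elliptic curve $D$; the Mayer--Vietoris spectral sequence for $X_0$, together with the Clemens--Schmid sequence, relates $H^1(D)$ directly to the sublattice $J$. Alternatively, one can use Carlson's description of extensions of mixed Hodge structures to recognize $E_x$ as the extension class governing the smoothing of $X_0$, which by direct computation is the period of $D$. Either route reduces the statement to a by-now-standard Hodge-theoretic computation in the theory of degenerating K3 surfaces.
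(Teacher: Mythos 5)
The paper offers no proof of this proposition at all --- it is introduced as ``a well-known consequence of Schmid's nilpotent orbit theorem'' --- so you were supplying the standard argument, and your overall plan (nilpotent orbit theorem for the convergence statement, Clemens--Schmid/Friedman--Scattone for the elliptic curve) is the right one. However, one step is wrong as written: the claim that $F^2_\infty\subset J_\bC$, deduced from opposedness of $F_\infty$ to the weight filtration. Opposedness gives the \emph{opposite} conclusion: the top weight-graded piece of the limit MHS ($\mathrm{Gr}^W_3$ in Type II, $\mathrm{Gr}^W_4$ in Type III) has nonzero $F^2$-part, so the line $F^2_\infty$ surjects onto it and is therefore \emph{not} contained in $W_2\supset J_\bC$. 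What converges to the cusp is not $[F^2_\infty]$ but the genuine period point: with $\Phi(\tau)=\exp(\tau N)\Psi(q)$ and $v$ spanning $F^2_\infty$, one has $[\exp(\tau N)v]=[v+\tau Nv+\tfrac{\tau^2}{2}N^2v]\to [N^k v]$ projectively as $\mathrm{Im}(\tau)\to\infty$, where $k$ is maximal with $N^kv\neq 0$. The formula $Nx=(x\cdot\lambda)\delta-(x\cdot\delta)\lambda$ gives $\mathrm{im}(N)_\bQ=\bQ\delta+\bQ\lambda=J_\bQ$ in Type II and $\mathrm{im}(N^2)_\bQ=\bQ\delta=J_\bQ$ in Type III, so $[N^kF^2_\infty]\in\bP(J_\bC)$; positivity of the polarization on the graded pieces keeps it out of $\bP(J_\bR)$ when ${\rm rk}\,J=2$, so the limit lies in $B_J$ (and in $B_J'$ after intersecting with $\obD_\rho$). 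With this correction the first half goes through.

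The second half is correct in outline and is how Friedman--Scattone argue: for a Type II Kulikov surface the double curves are mutually isomorphic elliptic curves $D$, Clemens--Schmid identifies $\mathrm{Gr}^W_1H^2_{\lim}\cong H^1(D)$ compatibly with $W_1=\mathrm{im}(N)=J$, and the induced weight-one Hodge line on $J_\bC$ is exactly $NF^2_\infty=H^{1,0}(D)$. Hence $E_x=J_\bC/(J+\bC x)\cong H^{0,1}(D)/H^1(D,\bZ)\cong D$. Note this also shows why the correction above matters: the elliptic curve is read off from the line $NF^2_\infty$, not from $F^2_\infty$, so getting the limit point right is not merely cosmetic.
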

 
 \begin{corollary}\label{cor:ords2346-again} If $n\neq 2,3,4,6$, any degeneration
 of $(X,\sigma)\in F_\rho$ has Type I. If $n\in \{3,4,6\}$,
 any degeneration of $(X,\sigma)\in F_\rho$
 has Type I or II.
 \end{corollary}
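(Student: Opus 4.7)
The plan is to combine the unnamed proposition (Schmid's nilpotent orbit theorem applied to $\rho$-markable degenerations) with Lemma~\ref{lem:ords346} and Corollary~\ref{cor:ords2346}. That proposition says: for any one-parameter degeneration $X^*\to C^*$ of $\rho$-markable K3 surfaces, a lift of the period map $\widetilde{C^*}\to \bD_\rho$ either extends across the puncture (corresponding to Type~I, $N=0$) or limits to a point of some rational boundary component $B'_J\subset \obD_\rho\ubb$. Moreover the lattice $J$ appearing in the Baily-Borel cusp coincides with the monodromy lattice of Definition~\ref{J-latt}: $\rk J=2$ corresponds to Type~II and $\rk J=1$ corresponds to Type~III, via Theorem~\ref{thm:pic-lef}.

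First I would handle the easy case $n\ne 2,3,4,6$. Here Corollary~\ref{cor:ords2346} tells us $\bD_\rho$ has no rational boundary components, i.e.\ $\obD_\rho\ubb=\bD_\rho$. So every lift of the period map automatically extends to the puncture, forcing $N=0$, i.e.\ Type~I.

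Second, for $n\in\{3,4,6\}$ I would apply Lemma~\ref{lem:ords346}: every rational boundary component $B'_J$ of $\bD_\rho$ has $\rk J=2$. By the proposition, the monodromy lattice of any Kulikov model must then have rank~$0$ or~$2$, which by Theorem~\ref{thm:pic-lef} rules out Type~III and leaves only Types~I and~II.

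The only mild subtlety — and the one point where I would pause to check — is the compatibility between the monodromy lattice $J\subset H^2(X_t,\bZ)$ of the Kulikov model and the isotropic sublattice of $T_\rho$ indexing the Baily-Borel cusp $B'_J$. This is built into the cited proposition, but the reason it works is that $\rho^*$ commutes with monodromy $T$ and hence with $N$; therefore $\rho^*$ preserves the primitive isotropic vector $\delta$ and (in Type~II) the class $\lambda\in \delta^\perp/\delta$ up to the monodromy action, so $J$ descends to a $\rho$-invariant primitive isotropic sublattice intersecting $T_\rho$ nontrivially, exactly the situation ruled out by Lemma~\ref{lem:ords346} when $\rk J=1$ and $n>2$. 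Once this compatibility is granted, the corollary follows immediately.
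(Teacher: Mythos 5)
Your argument is correct and is precisely the reasoning the paper intends: the corollary is stated without proof immediately after the nilpotent-orbit proposition, and the implicit argument is exactly your combination of that proposition with Lemma~\ref{lem:ords346} (only rank-$2$ cusps when $n>2$, ruling out Type~III) and Corollary~\ref{cor:ords2346} (no cusps at all when $n\neq 2,3,4,6$, forcing Type~I). Your closing remark on why the monodromy lattice $J$ is $\rho$-invariant and meets $T_{\rho,\bC}^{\zeta_n}$ is a correct justification of the compatibility the paper leaves implicit.
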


 The last statement was also proved by Matsumoto
 \cite{matsumoto2016degeneration} using different techniques. His
 proof also holds in some prime characteristics.

\subsection{Semitoroidal compactifications}
\label{sec:semitoroidal}

Semitoroidal compactifications of arithmetic quotients $\bD/\Gamma$
for type IV Hermitian symmetric domains $\bD$ were defined by Looijenga
\cite{looijenga2003compactifications-defined2} (where they were called
``semitoric''). They simultaneously generalize toroidal and
Baily-Borel compactifications of $\bD/\Gamma$.
The case of the complex ball $\bD$ (a type I symmetric Hermitian
domain) is comparatively trivial. The semitoroidal compactifications
in this case are implicit in
\cite{looijenga2003compactifications-defined1,
  looijenga2003compactifications-defined2}. We quickly overview
  the construction in both cases now.

\begin{definition}\label{semitor}
A {\it $\Gamma$-admissible semifan} $\mathfrak{F}$ consists of the following data:

When $n=2$, it is a convex, rational, locally polyhedral decomposition $\mathfrak{F}_J$ 
of the rational closure $\mathcal{C}^+(J^\perp/J)$ of the positive norm vectors, 
for all rank $1$ primitive isotropic sublattices
$J\subset N$, such that:
\begin{enumerate}
\item $\{\mathfrak{F}_J\}_{J\subset N}$ is $\Gamma$-invariant. In particular,
a fixed $\mathfrak{F}_J$ is invariant under the natural action of ${\rm Stab}_J(\Gamma)$ on $\mathcal{C}^+(J^\perp/J)$.
\item A compatibility condition of the $\{\mathfrak{F}_J\}_{J\subset N}$
along any primitive isotropic lattice $J'\subset N$ of rank $2$ holds,
see Definition \ref{compatible}.
\end{enumerate}

When $n>2$, the data is much simpler: It consists, for each primitive
isotropic sublattice $J\subset N$ satisfying $J_\bC\cap N_\bC^{\zeta_n}\neq \emptyset$,
of a primitive sublattice $\mathfrak{F}_J\subset J^\perp/J$ such that the collection
$\{\mathfrak{F}_J\}$ is $\Gamma$-invariant.  
\end{definition}

\begin{definition}\label{compatible} Let $J'\subset N$ be primitive isotropic of rank $2$.
We say that the collection $\{\mathfrak{F}_J\}_{J\subset N}$ is {\it compatible
along $J'$} if, given any primitive sublattice $J\subset J'$ of rank $1$,
the kernel of the hyperplanes of
$\mathfrak{F}_J$ containing $J'/J$, when intersected with $(J')^\perp/J\subset J^\perp/J$
and then descended to
$(J')^\perp/J'$, cut out a fixed sublattice $\mathfrak{F}_{J'}\subset (J')^\perp/J'$ 
which is independent of $J$.\end{definition}

In both the $n=2$ and $n>2$ cases, we use the same notation
$\mathfrak{F}:=\{\mathfrak{F}_J\}_{J\subset N}$ even though 
$J$ ranges over rank $1$ isotropic sublattices when $n=2$
and ranges over rank $2$ isotropic sublattices when $n>2$.

In the Type IV case, Looijenga constructs a compactification
$\bD/\Gamma\hookrightarrow \overline{\bD/\Gamma}^{\mathfrak{F}}$
for any $\Gamma$-admissible semifan $\mathfrak{F}$,
so consider the Type I case.
By Lemma~\ref{lem:ords346} we may restrict to $n\in \{3,4,6\}$. There is a $\bZ[\zeta_n]$-lattice
$$Q:=(N\otimes_\bZ \bZ[\zeta_n])^{\zeta_n}\subset N_\bC^{\zeta_n}=Q_\bC$$
on which Hermitian form $x\cdot \overline{y}$ defines a $\bZ[\zeta_n]$-valued
Hermitian pairing of signature $(1,\ell)$ for some $\ell$.
Any element of $\widetilde{\Gamma}_\rho$ (in particular,
any element of $\Gamma$) preserves $Q$ and the Hermitian form on it. The converse
also holds. Thus $\Gamma\subset U(Q)$ is a finite index subgroup
of the group of unitary isometries of $Q$ and $\Gamma_\bR = U(Q_\bC)=U(1,\ell)$.
The boundary components
$B_J=\bP(J_\bC^{\zeta_n})$ are then projectivizations of the
isotropic $\bZ[\zeta_n]$-lines $K\subset Q$. Here $K_\bC=J_\bC^{\zeta_n}$.

Choose a generator $k\in K$.
Then any $[x]\in \bD_\rho\subset \bP Q_\bC$ has a unique representative
$x\in Q_\bC$ for which $k\cdot x=1$. This realizes $\bD_\rho$ as a
generalized tube domain in the affine hyperplane $V_k:=\{k\cdot x=1\}\subset Q_\bC$.

Let $U_K\subset {\rm Stab}_K(\Gamma)$ be the unipotent
subgroup (i.e. $U_K$ acts on $K$, $K^\perp/K$, and $Q/K^\perp$ by the identity).
Then $U_K$ acts on $V_k$ by translations.
Choosing some isotropic $k'\in Q_\bC$ for which $k'\cdot k=1$, any
element $x\in V_k$ can be written uniquely as $x=k'+x_0+ck$ for some $x_0\in \{k,k'\}^\perp$
and $c\in \bC$. The image of $\bD_\rho$ is exactly those $x$
satisfying $2{\rm Re}(c)> -x_0\cdot \bar x_0$.

The fibration $\bD_\rho\to K_\bC^\perp/K_\bC$ 
sending $x\mapsto x_0\textrm{ mod }K_\bC$ is a fibration of right half-planes.
The action of $U_K$ fibers over the action of a translation
subgroup $\overline{U}_K\subset K^\perp/K$ on $K_\bC^\perp/K_\bC$ and thus, 
there is a fibration $$\bD_\rho/U_K\to (K_\bC^\perp/K_\bC)/\overline{U}_K=:A_K$$
over an abelian variety. The fibers are quotients of the right half-planes
with coordinate $c$ by a discrete, purely imaginary, translation group
isomorphic to $\bZ$. This realizes $\bD_\rho/U_K$ is a punctured holomorphic disc bundle over 
$A_K$.

\begin{definition} $\bD_\rho/U_K$ is the {\it first partial quotient} associated to the
Baily-Borel cusp $K$. The extension of this punctured disc bundle to a disc bundle
$\overline{\bD_\rho/U_K}^{\rm can}\to A_K$ for a given $K$
is called the {\it toroidal extension at the cusp $K$}. \end{definition}

We will identify the divisor at infinity, i.e. the zero section of the disc bundle,
with $A_K$ itself.

\begin{construction} The {\it toroidal compactification} of $\bD_\rho/\Gamma$ is constructed as follows:
Let $\Gamma_K$ be the finite group defined by the exact sequence
$$0\to U_K\to {\rm Stab}_K(\Gamma)\to \Gamma_K\to 0.$$
For each cusp $K$, quotient the toroidal extension
$$V_K:=\overline{\bD_\rho/U_K}^{\rm can}/\Gamma_K\supset \bD_\rho/{\rm Stab}_K(\Gamma).$$ 
A well-known theorem states that there exists a horoball neighborhood $\bP K_\bC\in N_K\subset \bD_\rho\ubb$
such that $(N_K\setminus \bP K_\bC)/{\rm Stab}_K(\Gamma)\hookrightarrow \bD_\rho/\Gamma$ injects.
Thus, we can glue a neighborhood of the boundary $A_K/\Gamma_K\subset V_K$ to $\bD_\rho/\Gamma$, ranging
over all $\Gamma$-orbits of cusps $K$. The result is the toroidal compactification
$\overline{\bD_\rho/\Gamma}^{\rm tor}$.
 \end{construction}

The boundary divisors of $\overline{\bD_\rho/\Gamma}^{\rm tor}$ are in bijection
with $\Gamma$-orbits of isotropic $\bZ[\zeta_n]$-lines $K\subset Q$ and 
the boundary divisor is isomorphic to $A_K/\Gamma_K$, where $\Gamma_K$
acts by a subgroup of the finite group $U(K^\perp/K)$. There is a morphism
$$\overline{\bD_\rho/\Gamma}^{\rm tor}\to \overline{\bD_\rho/\Gamma}\ubb$$
which contracts each boundary divisor to a point. As such, the normal bundle
of the boundary divisor is anti-ample. Passing to a finite index subgroup
$\Gamma_0\subset \Gamma$, we can assume that $\Gamma_K$ is trivial
for all cusps $K$ and the anti-ampleness still holds.
This proves that the normal bundle to $A_K\subset \overline{\bD_\rho/U_K}^{\rm can}$
in the first partial quotient is anti-ample.

Using \cite{grauer1962uber_modifikationen} one shows that
a divisor in a smooth analytic space, isomorphic to an abelian variety and
with anti-ample normal bundle, can be contracted along any abelian subvariety.
In particular, for any sub-$\bZ[\zeta_n]$-lattice $\mathfrak{F}_K\subset K^\perp/K$,
there is a contraction $$\overline{\bD_\rho/U_K}^{\rm can}\to \overline{\bD_\rho/U_K}^{\rm \mathfrak{F}_K}$$
which is an isomorphism away from the boundary divisor and contracts exactly
the translates of the abelian subvariety ${\rm im}(\mathfrak{F}_K)_\bC\subset A_K$.

To construct the semitoroidal compactification $\overline{\bD_\rho/\Gamma}^\mathfrak{F}$,
we wish to glue, at each cusp $K$, a punctured analytic open neighborhood of the boundary
of $\overline{\bD_\rho/U_K}^{\mathfrak{F}_K}/\Gamma_K$ to $\bD_\rho/\Gamma$. This
is only possible if the action of $\Gamma_K$ on $\overline{\bD_\rho/U_K}^{\rm can}$
descends along the above contraction.
The condition in Definition \ref{semitor} ensures that the collection
$\mathfrak{F}=\{\mathfrak{F}_K\}$
is $\Gamma$-invariant. So an individual
$\mathfrak{F}_K$ is $\Gamma_K$-invariant and the $\Gamma_K$ action
descends. Thus, we have constructed
the semitoroidal compactification.

\begin{remark} A feature of the construction is that one can pull back
a semifan $\mathfrak{F}$ for a Type IV domain to any Type I subdomain,
and there will be a morphism between the corresponding semitoric compactifications.
\end{remark}

\subsection{Recognizable divisors}
\label{sec:recognizable}

We recall the main new concept ``recognizability" introduced in
\cite{alexeev2021compact}. We slightly modify the definition as necessary
for moduli spaces of K3 surfaces with $\rho$-markable
automorphism:

\begin{definition}\label{recog-def}
A canonical choice of polarizing divisor $R$ for
$U\subset F_\rho$ is {\it recognizable} if for every
Kulikov surface $X_0$ of Type I, II, or III which smooths to
some $\rho$-markable K3 surface,
there is a divisor $R_0\subset X_0$ such that on {\it any} smoothing
into  $\rho$-markable K3 surfaces $X\to (C,0)$ with $C^*\subset U$,
the divisor $R_0$ is, up to the action of ${\rm Aut}^0(X_0)$,
the flat limit of $R_t$ for $t\neq 0\in C^*.$
\end{definition}

We use the term ``smoothing" to mean specifically a Kulikov model $X\to (C,0)$.
Roughly, Definition \ref{recog-def} amounts to saying that the
canonical choice $R$ can also be made on any Kulikov surface,
including smooth K3s. 

\begin{theorem}\label{main-thm-recog}
  If $R$ is recognizable, then $\oF_\rho\uslc$ is semitoroidal
  compactification of $F_\rho$ for a unique semifan $\mathfrak{F}_R$.
\end{theorem}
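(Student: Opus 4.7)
The plan is to modify the main theorem of \cite{alexeev2021compact}, which handles the non-equivariant case, to the $\rho$-equivariant setting; the overall strategy uses the period map together with Kulikov models to extract a semifan from the stable-pair compactification. Since $\oF_\rho\uslc$ is the normalization of the closure $\oZ$ of $\psi(F_\rho^{\rm sep})$ in $\oP_{2d,m}\uslc$, its boundary points correspond to stable slc limits $(\ocX,\epsilon \ocR)$ arising along one-parameter degenerations of surfaces in $U \subset F_\rho$, so my first task is to understand these limits canonically.

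Given a one-parameter family $X^* \to C^*$ of $\rho$-markable K3s, after finite base change we obtain a Kulikov model $X \to (C,0)$, which can be chosen $\rho$-equivariantly since $\sigma$ acts on the whole degeneration. Recognizability then provides a canonical divisor $R_0 \subset X_0$, unique up to $\Aut^0(X_0)$ and independent of the chosen smoothing. The stable slc limit $(\ocX, \epsilon \ocR)$ is obtained by contracting those components and strata of $X_0$ disjoint from $R_0$ and log-canonicalizing the result; because $R_0$ is intrinsic to $X_0$, so is this contraction.

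Next I would extract the semifan. Near each Baily-Borel cusp---indexed by a primitive isotropic rank-$1$ sublattice $K$ for the Type I subdomain when $n>2$, or by rank-$1$ and rank-$2$ primitive isotropic $J \subset T_\rho$ for the Type IV domain when $n=2$---the logarithm-of-monodromy invariant $\lambda$ from Theorem \ref{thm:pic-lef} lies in a canonical lattice. As $\lambda$ varies, the stable slc limit stays constant precisely when $\lambda$ lies in a fixed sublattice $\mathfrak{F}_K \subset K^\perp/K$ (Type I) or in a fixed cell of a polyhedral decomposition $\mathfrak{F}_J$ of the positive cone in $J^\perp/J$ (Type IV). This is exactly the data of a semifan as in Definition \ref{semitor}. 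Recognizability makes the collection $\{\mathfrak{F}_J\}$ well-defined; $\Gamma_\rho$-invariance is automatic from the canonical nature of $R$ and of $\psi$; the compatibility condition of Definition \ref{compatible} along rank-$2$ cusps follows by comparing Type II and Type III Kulikov models that share such a cusp, as in \cite{alexeev2021compact}; and uniqueness of $\mathfrak{F}_R$ follows because any other semifan inducing the same contractions would have to coincide with $\mathfrak{F}_R$ on each cusp.

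The final step is to exhibit an isomorphism $\oF_\rho\uslc \cong \overline{\bD_\rho/\Gamma_\rho}^{\mathfrak{F}_R}$. Both spaces are normal, both agree with $F_\rho^{\rm sep}$ on the open locus via the bijective period map, and their boundary strata are indexed by the same combinatorial data with matching local analytic neighborhoods by construction. The main obstacle lies in the Type IV case ($n=2$): one must verify the compatibility condition of Definition \ref{compatible} at rank-$2$ cusps, which requires matching the contractions coming from recognizability at adjacent Type II (rank-$1$) cusps with the toroidal data at the rank-$2$ cusp. This glueing across codimension-two degenerations is the technical heart of the modification of \cite{alexeev2021compact}. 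For $n>2$ the task simplifies significantly, since only Type I or II Kulikov degenerations occur by Corollary \ref{cor:ords2346-again}, reducing the semifan to a single sublattice $\mathfrak{F}_K$ at each cusp with no compatibility to check.
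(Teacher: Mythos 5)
Your outline follows the same blueprint as the paper (both are adaptations of \cite{alexeev2021compact}), but as written it has two genuine gaps. First, you never establish what the \emph{interior} of $\oF_\rho\uslc$ is. The space $\oF_\rho\uslc$ is defined as the normalization of the closure of $\psi(F_\rho^{\rm sep})$, where $F_\rho^{\rm sep}=(\bD_\rho\setminus\Delta_\rho)/\Gamma_\rho$; for the theorem one must show that this closure contains all of $\bD_\rho/\Gamma_\rho$ as an open set, i.e.\ that the stable limits over the discriminant $\Delta_\rho/\Gamma_\rho$ are exactly the ADE surfaces parametrized by $\Delta_\rho/\Gamma_\rho$ and nothing is collapsed or duplicated there. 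Your phrase ``both agree with $F_\rho^{\rm sep}$ on the open locus'' conflates $F_\rho^{\rm sep}$ with $\bD_\rho/\Gamma_\rho$. The paper handles this by taking the closure $\cM_\rho^*$ of $\cM_\rho$ in $\cM$, using recognizability for Type I Kulikov surfaces (smooth K3s outside $U$) to extend the universal family over $F_\rho^*=\cM_\rho^*/\Gamma_\rho$, and rerunning the injectivity argument of Lemma~\ref{lem:finite_slc}. Note that this step alone already finishes the case $n\neq 2,3,4,6$, where $\bD_\rho/\Gamma_\rho$ is compact; your sketch treats that case as trivial when in fact it is exactly this step.

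Second, your extraction of the semifan is not an argument. For $n>2$ there are no Type III degenerations, and the parameter that varies along a boundary component is not the monodromy invariant $\lambda$ (which is essentially fixed by the cusp) but the Type II period point in the abelian variety $A_K=(K_\bC^\perp/K_\bC)/\overline{U}_K$; the content of the theorem is that the locus of constancy of the stable model is a union of translates of a \emph{single} abelian subvariety, equivalently a sublattice $\mathfrak{F}_K\subset K^\perp/K$. You assert this rather than prove it. The paper gets it by (i) constructing a $K$-family of Kulikov surfaces over the first partial quotient $\overline{\bD_\rho/U_K}^{\rm can}$ by restricting the universal $\lambda$-families of \cite{alexeev2021compact}, which yields the sandwich $\overline{\bD_\rho/\Gamma_\rho}^{\rm tor}\to \oF_\rho\uslc\to\overline{\bD_\rho/\Gamma_\rho}\ubb$, and then (ii) invoking that the normal image of an abelian variety is an abelian variety to see that the fibers of the first map are translates of an abelian subvariety, which is the semifan. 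Without the $K$-families you have no morphism from the toroidal compactification, and without the abelian-variety argument you have no reason the constancy locus is linear. A smaller issue: your claim that the Kulikov model can be chosen $\rho$-equivariantly is unjustified; in general $\sigma$ extends only to a \emph{birational} automorphism of a Kulikov model, which is how the paper (and the proof of Theorem~\ref{thm:main}) treats it. Finally, the paper simply defers the $n=2$ case to \cite{alexeev2021compact} verbatim, so identifying the rank-two compatibility there as ``the technical heart'' of this theorem misplaces where the new work lies.
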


\begin{proof} The proof when $n=2$ is essentially the same
as \cite[Thm.~1.2]{alexeev2021compact}. So we restrict
our attention to the Type I case $n>2$, which is ultimately much
simpler anyways. First, we show that $\oF_\rho\uslc$ contains
$\bD_\rho/\Gamma_\rho$.

 Let $\cM_\rho^*$ be the closure
of the moduli space of $\rho$-marked K3 surfaces $\cM_\rho$
in the space of all marked K3 surfaces $\cM$ and let
$F_\rho^*=\cM_\rho^*/\Gamma_\rho$ be the quotient.
Given any smooth K3 surface $X_0\in F_\rho^*\setminus U$,
the recognizability implies that the universal family
$(\mathcal{X}^*,\mathcal{R}^*)\to U$ extends over $F_\rho^*$ 
by the same argument as  \cite[Prop.~6.3]{alexeev2021compact}.
Thus, the argument of Lemma \ref{lem:finite_slc} shows
that there is a morphism
$(F_\rho^*)^{\rm sep}=\bD_\rho/\Gamma_\rho\to P_{2d,m}$ and so  
we may as well have constructed $\oF_\rho\uslc$ by taking the normalization
of the closure of the image of $\bD_\rho/\Gamma_\rho$, which is notably
already normal.
This completes the proof when $n\neq 3,4,6$.

So let $\bP K_\bC$ be a Baily-Borel cusp of $\bD_\rho$ when $n\in \{3,4,6\}$.
We observe that the closure of $\bD_\rho /U_K$ in the toroidal extension 
$\bD(J)\subset \bD(J)^\lambda$ of the ``universal" first partial quotient
for unpolarized K3 surfaces, cf. \cite[Def.~4.18]{alexeev2021compact},
is simply the first partial quotient $\overline{\bD_\rho/U_K}^{\rm can}$. 
 \cite[Prop.~4.16]{alexeev2021compact} shows that $\bD(J)$
 embeds into a family of affine lines over $J^\perp/J\otimes_\bZ \widetilde{\mathcal{E}}$
 where $\widetilde{\mathcal{E}}$ is the universal elliptic curve over $\bH\sqcup (-\bH)$
 and $\bD(J)^\lambda$ is its closure in a projective line bundle.
 The space $\bD_\rho/U_K$ sits inside this affine line bundle as the inverse image of
 $$K^{\perp \textrm{ in }Q}/K\otimes_{\bZ[\zeta_n]}
 E\subset J^\perp/J\otimes_\bZ \widetilde{\mathcal{E}}$$
 where $E$ is the elliptic curve admitting an action of $\zeta_n$
 (note that $K=J$ but with the additional structure of a $\bZ[\zeta_n]$-lattice).
 
Thus we may restrict a Type II {\it $\lambda$-family}, cf.
\cite[Def.~5.34]{alexeev2021compact},
to a family $$\mathcal{X}\to \overline{\bD_\rho/U_K}^{\rm can}$$
of Kulikov surfaces of Types I + II. We call $\mathcal{X}$ a {\it $K$-family}.
Note that any $K$-family admits a birational automorphism
which is the action of the automorphism $\sigma$ on the restriction of $\mathcal{X}$ to
$(\bD_\rho\setminus \Delta_\rho)/U_K$.

The arguments in \cite[Secs.~6,\,8]{alexeev2021compact},
leading up to the proof of Theorem 1.2 of {\it loc. cit.} now all apply
to $K$-families $\mathcal{X}$, showing that there is a sandwich
of normal compactifications $$\overline{\bD_\rho/\Gamma_\rho}^{\rm tor}\to \oF_\rho\uslc \to
\overline{\bD_\rho/\Gamma_\rho}\ubb.$$ Using that the normal image
of an abelian variety is an abelian variety (a similar argument is used
in \cite[Thm.~7.18]{alexeev2021compact}), we conclude that there must
exist a $\Gamma_\rho$-admissible semifan $\mathfrak{F}_R$ for which 
$\oF_\rho\uslc=\overline{\bD_\rho/\Gamma_\rho}^{\rm \mathfrak{F}_R}$.
\end{proof}

\subsection{The main theorem}
\label{sec:main-thm}

\begin{theorem}
  \label{thm:main}
  Under the assumption~\ga{}, $R=C_1$ is recognizable
  for $F_\rho$. The stable pair
  compactification $\oF_\rho\uslc$  is a semitoroidal compactification
  of $\bD_\rho/\Gamma_\rho$.
\end{theorem}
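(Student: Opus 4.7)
The plan is to reduce to Theorem~\ref{main-thm-recog}: it suffices to verify that $R = C_1$ is a recognizable polarizing divisor on $F_\rho$, and then the semitoroidal compactification statement follows directly.

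First I would establish an equivariant extension of $\sigma$ to any Kulikov model of a $\rho$-markable family. Given $X \to (C, 0)$ with $C^* \subset F_\rho$, the monodromy operator on $H^2(X_t, \bZ)$ commutes with $\rho$, so $\sigma$ acts equivariantly on the Picard--Lefschetz data and on the log structure of $X_0$. After a suitable base change and by applying an equivariant version of Kulikov's theorem, $\sigma$ extends to a regular fiberwise automorphism $\sigma \colon X \to X$ preserving $X_0$. This step uses the restriction on Kulikov types given by Corollary~\ref{cor:ords2346-again}: Type III occurs only for $n = 2$, and Type II only for $n \in \{2, 3, 4, 6\}$.

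Second, let $R \subset X$ be the flat closure of $R_t = C_1 \subset X_t$, and define $R_0 := R|_{X_0}$. Among the fixed components of $\sigma$, the curve $C_1$ is distinguished by the Hodge index theorem as the unique component of genus $g \geq 2$, so $R$ is unambiguously defined. To show that $R_0$ is intrinsic to $X_0$ up to ${\rm Aut}^0(X_0)$, suppose two smoothings of $X_0$ within $F_\rho$ produce possibly different actions $\sigma, \sigma'$ on $X_0$; since both induce the fixed $\rho$-action on the limit mixed Hodge structure of $H^2$, the composition $\sigma^{-1} \sigma'$ lies in the kernel of ${\rm Aut}(X_0) \to O(H^2(X_0, \bZ))$, which sits inside ${\rm Aut}^0(X_0)$. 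Hence the flat limits from the two smoothings agree up to ${\rm Aut}^0(X_0)$, as required.

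The main obstacle is Step 1 in the Type III case ($n = 2$), where $X_0$ is a union of anticanonical rational surfaces meeting along a triangulated $2$-sphere of rational double curves, and the involution $\sigma$ acts combinatorially on this structure. One must verify that $\sigma$ extends consistently across all triple points and that $R_0$ (which need not be smooth and may meet multiple components) propagates flatly through the family. I would adapt the $\lambda$-family and $J$-family machinery from Sections 6 and 8 of \cite{alexeev2021compact} used in the proof of Theorem~\ref{main-thm-recog}; in the Type II case for $n \in \{3, 4, 6\}$, Lemma~\ref{lem:ords346} forces the double curves to be elliptic curves of $j$-invariant $0$ or $1728$, giving sufficient rigidity to identify $R_0$ directly.
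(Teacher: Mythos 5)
Your reduction to Theorem~\ref{main-thm-recog} and the goal of Step 2 --- showing that the limit $R_0$ is determined by $X_0$ up to ${\rm Aut}^0(X_0)$ --- match the paper's strategy, but the mechanism you propose has two genuine gaps. First, Step 1 assumes $\sigma$ extends to a \emph{regular} fiberwise automorphism of the given Kulikov model after an ``equivariant Kulikov theorem.'' Recognizability is a statement about a \emph{fixed} Kulikov surface $X_0$ and \emph{all} of its smoothings into $F_\rho$; an equivariant semistable model is obtained only after further birational modifications, which change $X_0$ and therefore do not address the model at hand. The paper never regularizes $\sigma$: it works with $\sigma_0 := \sigma|_{X_0}$ as a \emph{birational} self-map of $X_0$, and the key input is that any two Kulikov models differ by a sequence of Atiyah flops along a countable, ${\rm Aut}^0(X_0)$-invariant set of curves, so that up to ${\rm Aut}^0(X_0)$-conjugation there are only countably many possibilities for $\sigma_0$; continuity in a family of smoothings with $X_0$ held fixed then forces $\widetilde{\sigma}_0=\psi\circ\sigma_0\circ\psi^{-1}$ with $\psi\in{\rm Aut}^0(X_0)$.

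Second, your Hodge-theoretic argument in Step 2 is flawed even if one grants regularity. That $\sigma$ and $\sigma'$ induce the same action on the limit mixed Hodge structure shows only that $\sigma^{-1}\sigma'$ acts trivially on $H^2$; concluding that it lies in ${\rm Aut}^0(X_0)$ is a Torelli-type statement for Kulikov surfaces that you would have to prove (the kernel of ${\rm Aut}(X_0)\to O(H^2(X_0,\bZ))$ is not known a priori to be connected). More seriously, the relation $\sigma'=\sigma\circ\psi$ with $\psi\in{\rm Aut}^0(X_0)$ does \emph{not} imply $\Fix(\sigma')=\psi'(\Fix(\sigma))$ for some $\psi'\in{\rm Aut}^0(X_0)$; for that one needs the \emph{conjugation} relation $\sigma'=\psi\circ\sigma\circ\psi^{-1}$, which is exactly what the paper's countability argument delivers and what your argument does not. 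Finally, the difficulty you flag in the Type III case is moot in the paper's proof, which is uniform in the type and needs no case analysis beyond the countability of flop centers and the continuity of the limiting divisor.
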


\begin{proof}
By Theorem \ref{main-thm-recog}, the second statement follows from the first.
 Let $(X,R)\to(C,0)$ be a Kulikov model with a flat family of divisors $R\subset X$
  for which \begin{enumerate}
  \item there is an automorphism $\sigma$ on $X^*\to C^*$ making
  $(X_t,\sigma_t)\in F_\rho$ for $t\neq 0$,
\item  $R_t\subset {\rm Fix}(\sigma_t)$ is the fixed component
  of genus at least $2$ for $t\neq 0$, and
  \item $R_0=\lim_{t\to 0}R_t$.
  \end{enumerate}
  
  By \cite[Prop.~6.12]{alexeev2021compact}, it suffices to show
  that if we make a one-parameter deformation the smoothing of $X_0$ into $F_\rho$
  that keeps $X_0$ constant, the limiting curve $R_0$ does not deform,
  up to ${\rm Aut}^0(X_0)$.
  
  The automorphism $\sigma$ on the generic fiber of any smoothing defines a
  birational automorphism of $X$. Any two Kulikov
  models are related by an automorphism followed by
  a sequence of Atiyah flops of types 0, I, II
  along curves in $X_0$ which are either $(-2)$-curves or $(-1)$-curves
  on component(s) of $X_0$. As such, there are only countably
  many curves in $X_0$ along which it is possible to make an Atiyah flop,
  and this continues to be the case after a flop is made.
  Thus, up to conjugation by
  ${\rm Aut}^0(X_0)$, there are only countably many possibilities
  for the birational automorphism $\sigma_0:=\sigma {|}_{X_0}\colon X_0\dashrightarrow X_0$.

   Hence if $X_0\hookrightarrow X$ and $X_0\hookrightarrow \widetilde{X}$
   are smoothings into $F_\rho$ as above,
   we have $\widetilde{\sigma}_0 = \psi\circ \sigma_0\circ \psi\inv$
  for some $\psi\in {\rm Aut}^0(X_0)$.
  
  Let $\{A_j\}$ be the countable set of curves in $X_0$
  along which $\sigma_0$ can be indeterminate.
  Any such curve $A_j$ is ${\rm Aut}^0(X_0)$-invariant. Let $A=\cup_j A_j$
  be their union. Clearly, the limit divisor $R_0$ is contained in the union of
  $A\cup S$ where $S$ is the closure of the fixed locus of $\sigma_0$
  in its locus of determinacy. Similarly, $\widetilde{R}_0$ is contained
  in $A\cup \widetilde{S}$ and $\sigma_0(P)=P$
  if and only if 
  $\widetilde{\sigma}_0(\psi(P)) = \psi(P)$. Since the smoothing $\widetilde{X}$
  is a deformation of the smoothing $X$ and the limiting divisor of $R$
  varies continuously, we conclude that
  $\widetilde{R}_0=\psi(R_0)$ and therefore $R$ is recognizable.
  \end{proof}

\begin{proposition} Any element $(\oX,\epsilon \oR)\in \oF_\rho\uslc$ 
has an automorphism $\overline{\sigma}\in {\rm Aut}(\oX)$. Furthermore, 
$\oR={\rm Fix}(\overline{\sigma})$ and $\overline{\sigma}^*$
acts on $H^0(\oX,\omega_{\oX})\cong \bC$ by multiplication by $\zeta_n$.
  \end{proposition}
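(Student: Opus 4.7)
The plan is to exhibit $\overline{\sigma}$ by a limit argument from the interior of $\oF_\rho\uslc$. Since $\oF_\rho\uslc$ is proper and equals the normalization of the closure of the image of $F_\rho^{\rm sep}$ in $\oP_{2d,m}\uslc$, every boundary point $(\oX,\epsilon\oR)$ is the limit of a one-parameter family of interior points. Choose a DVR $(C,0)$ and a map $C\to\oF_\rho\uslc$ sending the generic point $\eta$ into $F_\rho^{\rm sep}$ and the closed point $0$ to $(\oX,\epsilon\oR)$; after a finite base change to lift the map to the stack $\cM\uslc$, pulling back the universal family produces a flat family of stable slc pairs $f\colon(\oX_C,\epsilon\oR_C)\to C$ whose generic fiber $(\oX_\eta,\epsilon\oR_\eta)$ is the stable pair of a $\rho$-markable K3 surface carrying a nonsymplectic automorphism $\sigma_\eta$ of order $n$ with $\sigma_\eta^*\omega_{\oX_\eta}=\zeta_n\omega_{\oX_\eta}$ and (divisorial) fixed locus $\oR_\eta$.

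The main step is extending $\sigma_\eta$ to the whole family. By Theorem~\ref{thm:slc-moduli} the stack $\cM\uslc$ is Deligne-Mumford, so the relative automorphism group scheme $\underline{\Aut}((\oX_C,\epsilon\oR_C)/C)$ is finite over $C$. The section $\sigma_\eta$ over $\eta$ therefore extends, after a further finite base change of $C$ if necessary, uniquely to a section $\overline{\sigma}_C$ over all of $C$; restricting to the closed fiber yields the required $\overline{\sigma}\in\Aut(\oX)$ of order $n$, preserving $\oR$ by construction.

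For the action on the dualizing form, each fiber of $f$ is Gorenstein with trivial dualizing sheaf, so $f_*\omega_{\oX_C/C}$ is an invertible $\cO_C$-module on which $\overline{\sigma}_C^*$ acts by multiplication by a unit $\chi\in\cO_C^\times$ satisfying $\chi(\eta)=\zeta_n$; constancy then gives $\overline{\sigma}^*\omega_\oX=\zeta_n\omega_\oX$. For the identity $\oR=\Fix(\overline{\sigma})$, the inclusion $\oR\subseteq\Fix(\overline{\sigma})$ is immediate from specialization of the relative fixed subscheme $\Fix(\overline{\sigma}_C)\subset\oX_C$, whose generic fiber is $\oR_\eta$. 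For the reverse inclusion, the constraint $\overline{\sigma}^*\omega_\oX=\zeta_n\omega_\oX\ne\omega_\oX$ forces the local action of $\overline{\sigma}$ at a smooth point of any fixed divisor to have normal eigenvalue $\zeta_n$; an extra divisorial fixed component $D\subset\Fix(\overline{\sigma})\setminus\oR$ on the central fiber would then arise as the limit of such a fixed divisor on nearby fibers, contradicting $\Fix(\sigma_\eta)=\oR_\eta$.

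The main obstacle is the extension of $\sigma_\eta$: absent the Deligne-Mumford property of $\cM\uslc$, the generic automorphism would only extend to a birational self-map of $\oX_C$, with possible indeterminacy at the central fiber. Finiteness of $\Aut$ for stable slc pairs, which is the key feature of KSBA moduli recorded in Theorem~\ref{thm:slc-moduli}, upgrades this to a regular automorphism via the valuative criterion of properness applied to the inertia, and is the crucial input making the whole argument go through.
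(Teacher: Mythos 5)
Your proposal is correct and follows essentially the same route as the paper: realize the boundary point as the stable limit of a one-parameter family of interior ADE K3 pairs, extend the generic automorphism over the DVR, deduce $\Fix(\overline{\sigma})=\oR$ from the rigidity/flatness of the fixed divisor in the family, and read off the $\zeta_n$-eigenvalue from the invertibility of $f_*\omega_{\oX/C}$. The only difference is the mechanism for extending $\sigma_\eta$ to the central fiber --- the paper invokes uniqueness of the relative canonical model, while you invoke finiteness (hence properness) of the relative automorphism scheme coming from the Deligne--Mumford property; these are interchangeable here.
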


\begin{proof} As noted in Remark \ref{sep-moduli}, any point in
 $F_\rho^{\rm sep}=(\bD_\rho \setminus \Delta_\rho)/\Gamma_\rho$
 corresponds to a pair $(\oX,\overline{\sigma})$ of an ADE K3 surface with 
 automorphism, for which $\oR = \Fix(\overline{\sigma})$ is ample and the minimal
 resolution is $\rho$-markable. Then any boundary point
 $(\oX_0,\epsilon \oR_0)\in \oF_\rho\uslc$ is a stable limit of such ADE K3 surface
 pairs $f\colon (\oX,\epsilon \oR) \to C$.
 
 Since $\oR_t$ is $\overline{\sigma}_t$-invariant and the canonical model is unique,
 $\oX$ admits an automorphism $\overline{\sigma}$
 whose fixed locus contains $\oR_0$. In fact, ${\rm Fix}(\overline{\sigma}_0)=\oR_0$:
  ${\rm Fix}(\overline{\sigma})$ is a Cartier divisor, and thus forms a flat family
 of divisors containing $\oR$.
 But ${\rm Fix}(\overline{\sigma}_0)$ already contains the flat limit $\oR_0$. 
The statement about $\omega_{\overline{X}_0}$ follows from the fact that
  $f_*\omega_{\oX/C}$ is invertible (by Base Change and Cohomology,
  since $R^1f_*\omega_{\oX/C}=0$) and $\overline{\sigma}_t^*$ acts
 by $\zeta_n$ on the generic fiber of this line bundle.\end{proof}

\section{Moduli of quotient surfaces}
\label{sec:quotient-surfaces}

We refer the reader to \cite{kollar2013singularities-of-the-minimal}
for the notions appearing in the following definitions.
The pair $(Y,\Delta)$ is called demi-normal if $X$ satisfies Serre's
$S_2$ condition, has double normal crossing singularities in
codimension~$1$,
and $\Delta=\sum d_iD_i$ is an effective Weil
$\bQ$-divisor with $0<d_i\le1$ not containing any components of the
double crossing locus of $Y$.

The following is
\cite[Prop.~2.50(4)]{kollar2013singularities-of-the-minimal}, using
our adopted notations.

\begin{proposition}\label{prop:index-1}
  \'Etale locally, there is a one-to-one correspondence between
  \begin{enumerate}
  \item[(a)] Local demi-normal pairs $(y\in Y, \frac{n-1}n B)$ of
    index~$n$, i.e. such that the divisor $nK_Y+(n-1)B$ is Cartier.
  \item[(b)] Local demi-normal pairs $(\wt y\in\wt Y)$ such that
    $K_{\wt Y}$ is Cartier, with a $\mu_n$-action that is free on a
    dense open subset, and such that the induced action on
    $\omega_{\wt Y}\otimes \bC(\wt y)$ is faithful.
  \end{enumerate}
  Moreover, the pair $(Y,\frac{n-1}n B)$ is slc iff so is $\wt Y$.
\end{proposition}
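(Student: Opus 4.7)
The statement is a direct quotation of \cite[Prop.~2.50(4)]{kollar2013singularities-of-the-minimal}, so the cleanest ``proof'' is a citation; but the plan for a direct argument runs through the standard index-$1$ (canonical) cover construction, in three steps.

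First, for $(a)\Rightarrow(b)$: given $(y\in Y,\tfrac{n-1}{n}B)$ with $nK_Y+(n-1)B$ Cartier, a local generator of $\cO_Y(nK_Y+(n-1)B)$ trivializes the reflexive sheaf $\cO_Y(n(K_Y+\tfrac{n-1}{n}B))$ and thereby determines an $\cO_Y$-algebra structure on
\[
\mathcal{A} \;=\; \bigoplus_{i=0}^{n-1}\cO_Y\bigl(\lfloor -i(K_Y+\tfrac{n-1}{n}B)\rfloor\bigr).
\]
I would set $\wt Y:=\operatorname{Spec}_Y\mathcal{A}$, with quotient $\pi\colon\wt Y\to Y$ and the grading-induced $\mu_n$-action; freeness on a dense open is immediate from the fact that $\pi$ is \'etale away from $B$, and the Hurwitz-type adjunction $K_{\wt Y}=\pi^*(K_Y+\tfrac{n-1}{n}B)$ characteristic for this cover both makes $K_{\wt Y}$ Cartier (after pulling back a local trivialization) and forces $\mu_n$ to act on $\omega_{\wt Y}\otimes\bC(\wt y)$ by a primitive character.

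For the converse $(b)\Rightarrow(a)$, take $Y:=\wt Y/\mu_n$ with quotient $\pi$. The faithfulness of the $\mu_n$-action on $\omega_{\wt Y}\otimes\bC(\wt y)$ guarantees that at each codimension-one ramification point the stabilizer acts on the conormal by a character of exact order $n$; this is precisely what is needed to take $B$ reduced with support the image of the ramification divisor and to obtain $K_{\wt Y}=\pi^*(K_Y+\tfrac{n-1}{n}B)$. Multiplying by $n$ gives $\pi^*(nK_Y+(n-1)B)=nK_{\wt Y}$, Cartier by hypothesis, so $nK_Y+(n-1)B$ is Cartier by Galois descent along $\pi$. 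For the slc equivalence, since $\pi$ is a finite $\mu_n$-cover that is quasi-\'etale in codimension one and $K_{\wt Y}=\pi^*(K_Y+\tfrac{n-1}{n}B)$, log discrepancies match under pullback on a common log resolution, so one pair is slc iff the other is; this is the usual pullback-of-discrepancies lemma for cyclic covers.

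The main subtlety I expect is the demi-normal setting: the classical index-$1$ cover and discrepancy-transformation arguments are developed for normal varieties, whereas here $Y$ may carry double crossings in codimension one. The hypothesis that $B$ shares no component with the double locus is exactly what allows the cyclic cover to be performed \'etale-locally through that stratum, and it is also why the correspondence is only asserted \'etale-locally; verifying that both the cover construction and the slc comparison pass cleanly through the double locus is where the non-normal case requires extra bookkeeping beyond the normal case.
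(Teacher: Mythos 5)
Your proposal takes essentially the same route as the paper: the paper gives no argument at all beyond the citation to \cite[Prop.~2.50(4)]{kollar2013singularities-of-the-minimal}, and your primary move is the same citation, supplemented by the standard cyclic-cover sketch, which is correct in outline. One small misstatement: the cover $\pi\colon \wt Y\to Y$ is \emph{not} quasi-\'etale in codimension one, since it ramifies to order $n$ along the divisor $B$; the correct (and sufficient) justification for the slc equivalence is the crepant pullback formula $K_{\wt Y}=\pi^*(K_Y+\tfrac{n-1}{n}B)$ together with the standard transformation of log discrepancies under finite crepant morphisms, which you also invoke.
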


The variety $\wt Y$ is called the local index-$1$ cover of the pair
$(Y,\frac{n-1}nB)$. \cite[Sec.~2]{kollar2013singularities-of-the-minimal}
also gives a global
construction.

\begin{theorem}
  Let $(\oX,\epsilon\oR)\in\oF_\rho\uslc$ and let $\pi\colon \oX\to
  Y=\oX/\mu_n$ be the quotient map with the branch divisor
  $B=f(\oR)$. Then
  \begin{enumerate}
  \item $nK_Y+(n-1)B\sim 0$,
  \item $B$ and $-K_Y$ are ample $\bQ$-Cartier divisors,
  \item the pair $(Y,\frac{n-1+\epsilon}n B)$ is stable for any
    rational $0<\epsilon\ll1$, i.e. it has slc singularities and the
    $\bQ$-divisor $K_Y + \frac{n-1+\epsilon}n B$ is ample.
  \end{enumerate}
  Vice versa, for a pair $(Y,B)$ satisfying the above conditions, its
  index-$1$ cover $\oX$ with the ramification divisor $\oR$ satisfies:
  \begin{enumerate}
  \item $K_\oX\sim 0$ and the $\mu_n$-action on $\oX$ is non-symplectic,
  \item $\oR$ is $\bQ$-Cartier,
  \item the pair $(\oX,\epsilon\oR)$ is stable for any rational
    $0<\epsilon\ll1$.
  \end{enumerate}
\end{theorem}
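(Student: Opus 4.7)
The plan is to apply Proposition~\ref{prop:index-1} étale-locally in both directions and globalize via the Hurwitz formula. I would work direction by direction, using the observation that the global $\mu_n$-action (already present in the forward direction, constructed from $nK_Y+(n-1)B\sim 0$ in the reverse direction) supplies the global framework in which the étale-local correspondence of Proposition~\ref{prop:index-1} can be pieced together.

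For the forward direction, start from $(\oX,\epsilon\oR)\in\oF_\rho\uslc$. The preceding proposition supplies $\bar\sigma\in\Aut(\oX)$ of order $n$ with $\bar\sigma^*\omega_\oX=\zeta_n\omega_\oX$ and $\Fix(\bar\sigma)=\oR$. Since $\oX$ is Gorenstein and slc with $K_\oX$ Cartier, the $\mu_n$-action is faithful on $\omega_\oX\otimes\bC(\bar x)$ at every point, putting $\oX$ into the framework of Proposition~\ref{prop:index-1}(b). Its quotient $Y=\oX/\mu_n$ with $B=\pi(\oR)$ then falls under case~(a), yielding simultaneously~(1) and the slc-ness of $(Y,\tfrac{n-1}{n}B)$. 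Ampleness of $B$ follows from $\pi^*B=n\oR$ via finite pullback (ample descends along finite morphisms), and then $-K_Y\sim_\bQ\tfrac{n-1}{n}B$ is ample, giving~(2). Stability~(3) reduces to the Hurwitz identity
\[
 \pi^*\!\left(K_Y+\tfrac{n-1+\epsilon}{n}B\right)\;\sim_\bQ\; K_\oX+\epsilon\oR,
\]
combined with the slc equivalence in Proposition~\ref{prop:index-1}, plus $K_Y+\tfrac{n-1+\epsilon}{n}B\sim_\bQ\tfrac{\epsilon}{n}B$ for ampleness.

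For the reverse direction, I would construct the global cyclic cover $\pi\colon\oX\to Y$ from the trivialization of $\omega_Y^{[n]}(-(n-1)B)$ implied by $nK_Y+(n-1)B\sim 0$, in the standard way: take $\oX=\mathrm{Spec}_Y\mathcal{A}$ for the $\mu_n$-algebra $\mathcal{A}=\bigoplus_{i=0}^{n-1}\omega_Y^{[-i]}(\lfloor i(n-1)/n\rfloor B)$. Proposition~\ref{prop:index-1}(a)$\Rightarrow$(b) then certifies étale-locally that $\oX$ is slc Gorenstein with $K_\oX\sim 0$ and nonsymplectic $\mu_n$-action. The properties~(1)--(3) for $(\oX,\epsilon\oR)$ follow by the symmetric versions of the arguments above: $\pi^*B=n\oR$ gives $\bQ$-Cartierness and ampleness of $\oR$, the Hurwitz identity gives stability, and slc-ness of $(\oX,\epsilon\oR)$ is equivalent to that of $(Y,\tfrac{n-1+\epsilon}{n}B)$.

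The main obstacle, in my view, is not the numerical bookkeeping but the globalization in the reverse direction: checking that the étale-local correspondences of Proposition~\ref{prop:index-1} glue into an honest global index-$1$ cover, and that $\oR$ is genuinely $\bQ$-Cartier (not merely Weil) on potentially non-normal $\oX$. Both should follow from the global equivalence $nK_Y+(n-1)B\sim 0$ together with the identity $\pi^*B=n\oR$, but the details need care when $\oX$ has slc but non-canonical singularities, where the various notions of pullback and pushforward must be handled through the normalization.
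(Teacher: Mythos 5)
Your proposal is correct and follows essentially the same route as the paper: the authors' proof is a one-line reduction to Proposition~\ref{prop:index-1} together with exactly the two formulas you use, $\pi^*(B)=n\oR$ and $\pi^*\bigl(K_Y+\tfrac{n-1+\epsilon}{n}B\bigr)=K_\oX+\epsilon\oR$. Your additional remarks on globalizing the index-$1$ cover are handled in the paper by citing the global construction in \cite[Sec.~2]{kollar2013singularities-of-the-minimal}, so nothing further is needed.
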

\begin{proof}
  Follows from the above Proposition~\ref{prop:index-1} and the
  formulas
  \begin{displaymath}
    \pi^*(B)=n\oR,\qquad
    \pi^*\left( K_Y + \frac{n-1+\epsilon}n B \right) = K_\oX + \epsilon\oR.    
  \end{displaymath}
\end{proof}

\begin{corollary}
  The coarse moduli space $\oF_\rho\uslc$ coincides with the
  normalization of the KSBA compactification of the irreducible
  component in the moduli space of
  the log canonical pairs $(Y,\frac{n-1+\epsilon}n B)$ of log del
  Pezzo surfaces $Y$ with $(n-1)B\in |-nK_Y|$ in which a generic
  surface is a quotient of a K3 surface with a non-symplectic
  automorphism of type $\rho$.
  The stack for the former is a $\mu_n$-gerbe over the stack for the latter.
\end{corollary}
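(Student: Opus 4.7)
The plan is to assemble the corollary from three ingredients: the pointwise correspondence of the preceding theorem, the preceding proposition recovering $\overline\sigma$ from $(\oX,\epsilon\oR)$, and the standard KSBA framework of \cite{kollar2021families-of-varieties}. First, I would interpret the preceding theorem as giving a set-theoretic bijection between boundary points of $\oF_\rho\uslc$ and boundary points of the locus $\oQ$ in the KSBA moduli space of stable pairs $(Y,\frac{n-1+\epsilon}{n}B)$ whose general member is a $\mu_n$-quotient of a K3 surface of type $\rho$. The point is that index-$1$ cover is a canonical inverse to the quotient construction, by Proposition~\ref{prop:index-1}.

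Next I would promote this to a morphism of stacks. The quotient direction is functorial: given a family $(\ocX,\epsilon\ocR)\to S$ over $\oF_\rho\uslc$ with its $\mu_n$-action (supplied by the preceding proposition), dividing fiberwise by $\mu_n$ yields a flat family $(\cY,\frac{n-1+\epsilon}{n}\cB)\to S$, and the formula $\pi^*(K_Y+\tfrac{n-1+\epsilon}{n}B)=K_\oX+\epsilon\oR$ shows that the ampleness, slc, and Cartier conditions defining stability on one side translate to the corresponding conditions on the other; hence we obtain a morphism $\Phi\colon\oF_\rho\uslc\to\oQ$ of KSBA moduli. In the other direction, the global index-$1$ cover of \cite[Sec.~2]{kollar2013singularities-of-the-minimal} is functorial once the cyclic cover data $nK_Y+(n-1)B\sim 0$ is fixed, so $\Phi$ has a set-theoretic inverse on closed points of the irreducible component $\oQ$.

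To conclude equality with the normalization, I would use that $\oF_\rho\uslc$ is normal by definition, that $\Phi$ factors through the reduced image and is bijective on closed points of that image, and that $\Phi$ is finite (both source and target are proper over the Baily--Borel compactification of $\bD_\rho/\Gamma_\rho$, and the fibers are finite by the bijection). A finite bijective map from a normal variety onto a reduced target factors through the normalization as an isomorphism, giving the first assertion. For the gerbe statement, observe that by the preceding proposition the canonical automorphism $\overline\sigma$ is intrinsic to $(\oX,\epsilon\oR)$ and generates a copy of $\mu_n\subset\Aut(\oX,\epsilon\oR)$, while by Proposition~\ref{prop:index-1} automorphisms of $(Y,\frac{n-1+\epsilon}{n}B)$ lift uniquely to $\mu_n$-equivariant automorphisms of the index-$1$ cover, so $\Aut(\oX,\epsilon\oR)/\mu_n=\Aut(Y,\frac{n-1+\epsilon}{n}B)$, exhibiting the former stack as a $\mu_n$-gerbe over the latter.

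The main obstacle is verifying that the set-theoretic inverse of $\Phi$ really comes from a morphism of stacks, i.e.\ that the index-$1$ cover construction of \cite{kollar2013singularities-of-the-minimal} commutes with arbitrary base change on the KSBA component $\oQ$ and produces a \emph{stable} family on the K3 side; this ultimately reduces to the divisibility relation $nK_Y+(n-1)B\sim 0$ being preserved in families, which holds on the fixed irreducible component by flatness and constancy of numerical invariants, combined with the last clause of Proposition~\ref{prop:index-1} identifying slc for the pair with slc upstairs.
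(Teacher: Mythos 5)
Your argument is correct and follows the same route the paper takes, namely deducing the corollary directly from the preceding theorem (the quotient/index-$1$-cover correspondence for stable pairs) together with the preceding proposition supplying the canonical $\mu_n$-action on each $(\oX,\epsilon\oR)$; the paper's own proof consists only of the remark that a small deformation of a K3 surface is a K3 surface, which serves to identify the relevant irreducible component, and your write-up simply makes explicit the functoriality, normalization, and automorphism-group steps that the authors leave to the reader.
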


For the proof, we note that a small deformation of a K3 surface is a K3 surface.

\begin{example}
  The KSBA compactification moduli of K3 surfaces of degree~$2$ for
  the ramification divisor $R$ constructed in
  \cite{alexeev2019stable-pair} is equivalent to the Hacking's
  compactification \cite{hacking2004compact-moduli} of the moduli
  space of pairs $(\bP^2, \frac{1+\epsilon}2 B_6)$ of plane sextic
  curves.
\end{example}

\section{Extensions}
\label{sec:extensions}

The results of this paper are easily extended to the case of a
nonsymplectic action by an arbitrary finite group $G$ and to more
general divisors defined by group actions. Most of the changes amount to
introducing more cumbersome notations. 

\subsection{A general nonsymplectic group of automorphisms}

\begin{definition}
  Let $X$ be a smooth K3 surface and $\sigma\colon G\subset\Aut X$ be a finite
  symmetry group. The action of $G$ on $H^{2,0}(X)=\bC \omega_X$ gives
  the exact sequence
  \begin{displaymath}
    0 \to G_0 \to G \xrightarrow{\alpha} \mu_n\to 1, \qquad \mu_n \subset \bC^*.
  \end{displaymath}
  One says that $G$ is nonsymplectic (or not purely symplectic) if
  $G\ne G_0$, i.e. $\alpha\ne 1$.
\end{definition}

We now extend the results of Section~\ref{sec:moduli} directly to this more
general setting.

\begin{definition}
  Fix a finite subgroup $\rho\colon G\to O(\lk)$ and a nontrivial
  character $\chi\colon G\to\bC^*$. Let
  $(X,\sigma\colon G\to\Aut X)$ be a K3 surface with a non-symplectic
  automorphism group.

  A {\it $(\rho,\chi)$-marking} of $(X,\sigma)$ is an isometry
  $\phi:H^2(X,\bZ)\to \lk$ such that for any $g\in G$ one has
  $\phi\circ\sigma(g)^* = \rho(g)\circ\phi$ and such that
  the character $\alpha\colon G\to \bC^*$ induced by $\sigma$
  coincides with $\chi$. 
  We say that $(X,\sigma)$ is
  {\it $\rho$-markable} if it admits a $\rho$-marking.

  A family of $(\rho,\chi)$-marked K3 surfaces is a smooth family
  $f\colon (\cX,\sigma_B,\phi_B)\to B$ with a group of automorphisms
  $\sigma_B\colon G\to \Aut(\cX/B)$ and with a marking
  $\phi_B\colon R^2f_*\bZ \to L\otimes\underline{\bZ}_B$ such that
  every fiber is a $(\rho,\chi)$-marked K3 surface.

  A family of smooth $\rho$-markable K3 surfaces is a family
  $f\colon (\cX,\sigma_B)\to B$ of K3 surfaces with a group of
  automorphisms over base $B$ which admits a $\rho$-marking locally on
  $B$.

  We define the moduli stacks $\cM_{\rho,\chi}$ of
  $(\rho,\chi)$-marked, resp. $F_{\rho,\chi}$ of
  $(\rho,\chi)$-markable K3 by taking $\cM_{\rho,\chi}(B)$,
  resp. $F_{\rho,\chi}(B)$ to be the groupoids of such
  families over~$B$.
\end{definition}

\begin{definition}
  Define the vector space 
  \begin{displaymath} 
    L_\bC^{\rho,\chi} = \{x\in L_\bC \mid \rho(g)(x) = \chi(g)x \} 
  \end{displaymath}
  to be the intersection
  of the eigenspaces for the individual $g\in G$, 
  and the period domain as 
  \begin{displaymath}
    \bD_{\rho,\chi} = \bP\{ x\in L_\bC^{\rho,\chi} \mid x\cdot \bar x > 0\}
  \end{displaymath}
  The second condition is redundant if there exists $g\in G$ with
  $\chi(g)>2$. Thus, $\bD_\rho$ is a type IV domain if $|\chi(G)|=2$
  and a complex ball, a type I domain if $|\chi(G)|>2$.

  The discriminant locus is 
  $\Delta_\rho:= \cup_{\delta}\delta^\perp\cap\Delta_\rho$ ranging
  over all roots $\delta$ in $(L^G)^\perp$, where $L^G= \{a\in L \mid
  \rho(g)(a) = a \}$ is the sublattice of $L$ fixed by $G$.
\end{definition}

\begin{definition}
  The group of changes-of-marking is
  $$\Gamma_\rho := \{\gamma\in O(L) \mid \gamma\circ\rho = \rho\circ
  \gamma\}.$$
\end{definition}

Then the direct analogue of Lemma~\ref{lem:im-pi-rho} and
Theorem~\ref{separated-rho} is

\begin{theorem}
  For a fixed finite group $\rho\colon G\to O(L)$ with a nontrivial
  character $\chi\colon G\to \bC^*$:
  \begin{enumerate}
  \item There exists a fine moduli space $\cM_{\rho,\chi}$ of $(\rho,\chi)$-marked 
    K3 surfaces $(X,\sigma,\phi)$.
    It admits an \'etale period map
    $\pi_\rho\colon \cM_{\rho,\chi}\to \bD_{\rho,\chi}\setminus\Delta_\rho$. The
    fiber $\pi_\rho\inv(x)$ over a point $x\in\bD_{\rho,\chi}\setminus
    \Delta_\rho$ is a torsor over $\Gamma_\rho\cap (\bZ_2\cap W_x)$.
  \item The moduli stack of $\rho$-markable K3 surfaces $(X,\sigma)$
    is obtained as a quotient of $F_{\rho,\chi}$ by $\Gamma_\rho$. On the
    level of coarse moduli spaces it admits a bijective map to
    $(\bD_{\rho,\chi}\setminus \Delta_\rho)/\Gamma_\rho$. 
  \end{enumerate}
\end{theorem}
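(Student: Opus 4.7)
The plan is to adapt the arguments of Lemma~\ref{lem:im-pi-rho} and Theorem~\ref{separated-rho} almost verbatim, replacing the single generator $\rho\in O(\lk)$ by the finite subgroup $\rho(G)\subset O(\lk)$ and replacing ``fixed vector'' by ``simultaneous $\chi$-eigenvector for every $\rho(g)$, $g\in G$''. The nontriviality of $\chi$ plays precisely the role that $n>1$ did in the cyclic case: it guarantees the existence of some $g\in G$ with $\chi(g)\ne 1$, from which the key orthogonality $\lk^G\perp x$ will follow.

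For part~(1), I would first establish the $(\rho,\chi)$-analog of Lemma~\ref{lem:im-pi-rho}(1): a marking $\phi\colon H^2(X,\bZ)\to \lk$ is a $(\rho,\chi)$-marking iff $x=\pi(X,\phi)\in\bD_{\rho,\chi}\setminus\Delta_\rho$ and there exists an ample line bundle $\cL_h$ with $h=\phi(\cL_h)\in \lk^G$. For necessity: by definition $\sigma(g)^*\omega_X=\chi(g)\omega_X$, hence $\rho(g)(x)=\chi(g)x$ for all $g$, so $x\in L_\bC^{\rho,\chi}$; then for any $a\in \lk^G$ the computation $(a,x)=(\rho(g)a,\rho(g)x)=\chi(g)(a,x)$ with some $\chi(g)\ne 1$ forces $(a,x)=0$, placing $\lk^G\subset S_X^G$, and $G$-averaging any ample class supplies an ample class in $\lk^G$. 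For sufficiency: Theorem~\ref{thm:torelli} lifts each $\rho(g)$ to an automorphism $\sigma(g)\in\Aut X$ as soon as each $\rho(g)$ preserves $\cK_X$, which by averaging reduces to the existence of a $G$-invariant Kähler class, i.e.\ precisely an ample class in $\lk^G$. Torelli uniqueness then forces $\sigma(gh)=\sigma(g)\sigma(h)$, yielding an actual group homomorphism $\sigma\colon G\to\Aut X$ with character $\chi$. The analog of Lemma~\ref{lem:im-pi-rho}(2) is proved identically: roots $\delta\in(\lk^G)^\perp$ with $x\perp\delta$ make $\pm\phi\inv(\delta)$ effective, contradicting the ampleness of $\cL_h$ via $h\perp\delta$; away from $\Delta_\rho$ one produces a chamber $\cC\subset P_x\setminus\bigcup\delta^\perp$ meeting $\lk^G$ non-trivially. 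Two markings $\phi,\phi'=g\circ\phi$ of the same surface are simultaneous $(\rho,\chi)$-markings for the same $\sigma$ iff $g\circ\rho(h)=\rho(h)\circ g$ for all $h\in G$, i.e.\ $g\in\Gamma_\rho$, giving the torsor statement. Existence of the fine moduli $\cM_{\rho,\chi}$ as an open locus of $\pi\inv(\bD_{\rho,\chi}\setminus\Delta_\rho)$, and étaleness of $\pi_\rho$, then follow exactly as in the cyclic case.

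For part~(2), quotienting by $\Gamma_\rho$ identifies $(\rho,\chi)$-marked surfaces differing by a change of marking, yielding the moduli stack of $\rho$-markable K3 surfaces with $G$-action of character $\chi$, together with a map $F_{\rho,\chi}\to(\bD_{\rho,\chi}\setminus\Delta_\rho)/\Gamma_\rho$ that is bijective on coarse moduli because all points in a $\pi_\rho$-fiber are $\Gamma_\rho$-conjugate. The main obstacle is verifying that the individual Torelli lifts of the $\rho(g)$ assemble into an honest group action rather than merely a collection of pairwise commuting automorphisms; this is handled by Torelli's uniqueness clause, since both $\sigma(gh)$ and $\sigma(g)\sigma(h)$ cover $\rho(gh)=\rho(g)\rho(h)$. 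A secondary technicality is that averaging an ample class could a priori land in $-\cK_X$ on the wrong connected component of $\cM$, which is resolved by restricting to the component whose Weyl chambers meet $\lk^G$ inside the positive cone $P_x$.
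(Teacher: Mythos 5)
Your proposal is correct and follows essentially the same route as the paper, whose proof simply notes that the nontriviality of $\chi$ yields some $g$ with $\rho(g)(x)\ne x$, hence $\lk^G\perp x$ and $S_X^G\simeq \lk^G$, after which the proofs of Lemma~\ref{lem:im-pi-rho} and Theorem~\ref{separated-rho} carry over verbatim. You spell out two details the paper leaves implicit (that Torelli uniqueness makes the lifts $\sigma(g)$ into an honest homomorphism $G\to\Aut X$, and the choice of connected component), but these are elaborations of the same argument rather than a different approach.
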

\begin{proof}
  If the group $G$ does not act purely symplectically, i.e. there
  exists $g\in G$ with $\rho(g)(x)\ne x$ then $L^G\perp x$ and
  $S_X^G\simeq L^G$. The rest of the proof of
  Lemma~\ref{lem:im-pi-rho} works the same for any finite group. 
  And the proof of Theorem~\ref{separated-rho} goes through verbatim.
\end{proof}

\subsection{More general polarizing divisors}

With a more general group action, there are more choices for the
polarizing divisors. For a generic K3 surface $X$ with a period
$x\in\bD_{\rho,\chi}\setminus \Delta_\rho$ we can consider any
combination $\sum b_i B_i$ of curves $B_i$ which are either fixed by some
element $g\in G$ or are some of the $(-2)$-curves corresponding to the
roots in the generic Picard lattice $(L_\bC^{\rho,\chi})^\perp \cap
L$ that generically gives a big and nef divisor on
$X$. Theorem~\ref{thm:main} extends immediately to this situation with
the same proof.

\bibliographystyle{amsalpha}
%\bibliography{va}

\begin{thebibliography}{Loo03b}

\bibitem[ABE20]{alexeev2020compactifications-moduli}
Valery Alexeev, Adrian Brunyate, and Philip Engel, \emph{Compactifications of
  moduli of elliptic {K3} surfaces: stable pair and toroidal}, Geom. and
  Topology, to appear (2020), arXiv:2002.07127.

\bibitem[AE21]{alexeev2021compact}
Valery Alexeev and Philip Engel, \emph{Compact moduli of {K}3 surfaces},
  Submitted (2021), arXiv:2101.12186.

\bibitem[AET19]{alexeev2019stable-pair}
Valery Alexeev, Philip Engel, and Alan Thompson, \emph{Stable pair
  compactification of moduli of {K3} surfaces of degree 2}, arXiv:1903.09742.

\bibitem[AS08]{artebani2008non-symplectic}
Michela Artebani and Alessandra Sarti, \emph{Non-symplectic automorphisms of
  order 3 on {$K3$} surfaces}, Math. Ann. \textbf{342} (2008), no.~4, 903--921.
  \MR{2443767}

\bibitem[AS15]{artebani2015symmetries}
\bysame, \emph{Symmetries of order four on {K}3 surfaces}, J. Math. Soc. Japan
  \textbf{67} (2015), no.~2, 503--533. \MR{3340184}

\bibitem[ast85]{asterisque1985geometrie-des-surfaces}
\emph{G\'eom\'etrie des surfaces {$K3$}: modules et p\'eriodes}, Soci\'et\'e
  Math\'ematique de France, Paris, 1985, Papers from the seminar held in
  Palaiseau, October 1981--January 1982, Ast{\'e}risque No. 126 (1985).
  \MR{785216 (87h:32052)}

\bibitem[AST11]{artebani2011k3-surfaces}
Michela Artebani, Alessandra Sarti, and Shingo Taki, \emph{{$K3$} surfaces with
  non-symplectic automorphisms of prime order}, Math. Z. \textbf{268} (2011),
  no.~1-2, 507--533, With an appendix by Shigeyuki Kond\={o}. \MR{2805445}

\bibitem[BB66]{baily1966compactification-of-arithmetic}
W.~L. Baily, Jr. and A.~Borel, \emph{Compactification of arithmetic quotients
  of bounded symmetric domains}, Ann. of Math. (2) \textbf{84} (1966),
  442--528. \MR{0216035 (35 \#6870)}

\bibitem[DH22]{deopurkar2021stable-quadrics}
Anand Deopurkar and Changho Han, \emph{Stable quadrics, admissible covers, and
  {K}ond\={o}'s sextic {$K3$} surfaces}, In preparation, 2022.

\bibitem[Dil09]{dillies2009order6}
Jimmy Dillies, \emph{Order 6 non-symplectic automorphisms of {K}3 surfaces},
  arXiv:0912.5228 (2009).

\bibitem[Dil12]{dillies2012on-some}
\bysame, \emph{On some order 6 non-symplectic automorphisms of elliptic {K}3
  surfaces}, Albanian J. Math. \textbf{6} (2012), no.~2, 103--114. \MR{3009163}

\bibitem[DK07]{dolgachev2007moduli-of-k3}
Igor~V. Dolgachev and Shigeyuki Kond\={o}, \emph{Moduli of {$K3$} surfaces and
  complex ball quotients}, Arithmetic and geometry around hypergeometric
  functions, Progr. Math., vol. 260, Birkh\"{a}user, Basel, 2007, pp.~43--100.
  \MR{2306149}

\bibitem[Dol96]{dolgachev1996mirror-symmetry}
I.~V. Dolgachev, \emph{Mirror symmetry for lattice polarized {$K3$} surfaces},
  J. Math. Sci. \textbf{81} (1996), no.~3, 2599--2630, Algebraic geometry, 4.
  \MR{1420220 (97i:14024)}

\bibitem[Fri84]{friedman1984a-new-proof}
Robert Friedman, \emph{A new proof of the global {T}orelli theorem for {$K3$}
  surfaces}, Ann. of Math. (2) \textbf{120} (1984), no.~2, 237--269. \MR{763907
  (86k:14028)}

\bibitem[FS86]{friedman1986type-III}
Robert Friedman and Francesco Scattone, \emph{Type {${\rm III}$} degenerations
  of {$K3$} surfaces}, Invent. Math. \textbf{83} (1986), no.~1, 1--39.
  \MR{813580 (87k:14044)}

\bibitem[Gra62]{grauer1962uber_modifikationen}
Hans Grauert, \emph{\"{U}ber {M}odifikationen und exzeptionelle analytische
  {M}engen}, Math. Ann. \textbf{146} (1962), 331--368. \MR{137127}

\bibitem[Hac04]{hacking2004compact-moduli}
P.~Hacking, \emph{{C}ompact moduli of plane curves}, Duke Math. J. \textbf{124}
  (2004), no.~2, 213--257. \MR{MR2078368 (2005f:14056)}

\bibitem[Kol13]{kollar2013singularities-of-the-minimal}
J\'anos Koll\'ar, \emph{Singularities of the minimal model program}, Cambridge
  Tracts in Mathematics, vol. 200, Cambridge University Press, Cambridge, 2013,
  With a collaboration of S\'andor Kov\'acs. \MR{3057950}

\bibitem[Kol21]{kollar2021families-of-varieties}
\bysame, \emph{Families of varieties of general type}, To appear, 2021,
  availabe at https://web.math.princeton.edu/$\sim$kollar/.
  
\bibitem[Kon02]{kondo2002moduli-of-curves-of-genus-4}
Shigeyuki Kond\={o}, \emph{The moduli space of curves of genus 4 and Deligne-Mostow's complex reflection groups}, Algebraic geometry 2000, Azumino (Hotaka), Adv. Stud. Pure Math., vol.~36, Math. Soc. Japan, Tokyo, 2002, pp. 383--400. \MR{1971521}

\bibitem[Kon20]{kondo2020k3-surfaces}
Shigeyuki Kond\={o}, \emph{{K}3 surfaces}, vol.~32, EMS Tracts in Mathematics,
  2020.

\bibitem[Kul77]{kulikov1977degenerations-of-k3-surfaces}
Vik.~S. Kulikov, \emph{Degenerations of {$K3$} surfaces and {E}nriques
  surfaces}, Izv. Akad. Nauk SSSR Ser. Mat. \textbf{41} (1977), no.~5,
  1008--1042, 1199. \MR{0506296}

\bibitem[Loo03a]{looijenga2003compactifications-defined1}
Eduard Looijenga, \emph{Compactifications defined by arrangements. {I}. {T}he
  ball quotient case}, Duke Math. J. \textbf{118} (2003), no.~1, 151--187.
  \MR{1978885 (2004i:14042a)}

\bibitem[Loo03b]{looijenga2003compactifications-defined2}
\bysame, \emph{Compactifications defined by arrangements. {II}. {L}ocally
  symmetric varieties of type {IV}}, Duke Math. J. \textbf{119} (2003), no.~3,
  527--588. \MR{2003125 (2004i:14042b)}

\bibitem[Mat16]{matsumoto2016degeneration}
Yuya Matsumoto, \emph{Degeneration of {K}3 surfaces with non-symplectic
  automorphisms}, arXiv:1612.07569 (2016).

\bibitem[MO98]{machida1998on-k3-surfaces}
Natsumi Machida and Keiji Oguiso, \emph{On {$K3$} surfaces admitting finite
  non-symplectic group actions}, J. Math. Sci. Univ. Tokyo \textbf{5} (1998),
  no.~2, 273--297. \MR{1633933}

\bibitem[Nik79a]{nikulin1979finite-groups}
V.~V. Nikulin, \emph{Finite groups of automorphisms of {K}\"{a}hlerian {$K3$}
  surfaces}, Trudy Moskov. Mat. Obshch. \textbf{38} (1979), 75--137.
  \MR{544937}

\bibitem[Nik79b]{nikulin1979integer-symmetric}
\bysame, \emph{Integer symmetric bilinear forms and some of their geometric
  applications}, Izv. Akad. Nauk SSSR Ser. Mat. \textbf{43} (1979), no.~1,
  111--177, 238. \MR{525944 (80j:10031)}

\bibitem[PP81]{persson1981degeneration-of-surfaces}
Ulf Persson and Henry Pinkham, \emph{Degeneration of surfaces with trivial
  canonical bundle}, Ann. of Math. (2) \textbf{113} (1981), no.~1, 45--66.
  \MR{604042}

\bibitem[PSS71]{piateski-shapiro1971torelli}
I.~I. Pjatecki\u{\i}-Shapiro and I.~R. Shafarevi\v{c}, \emph{Torelli's theorem
  for algebraic surfaces of type {${\rm K}3$}}, Izv. Akad. Nauk SSSR Ser. Mat.
  \textbf{35} (1971), 530--572. \MR{0284440}

\end{thebibliography}

\def\cprime{$'$}
\providecommand{\bysame}{\leavevmode\hbox to3em{\hrulefill}\thinspace}
\providecommand{\MR}{\relax\ifhmode\unskip\space\fi MR }
% \MRhref is called by the amsart/book/proc definition of \MR.
\providecommand{\MRhref}[2]{%
  \href{http://www.ams.org/mathscinet-getitem?mr=#1}{#2}
}
\providecommand{\href}[2]{#2}

\end{document}